\documentclass[leqno]{amsart}

\usepackage[T1]{fontenc}
\usepackage[utf8]{inputenc}
\usepackage[english]{babel}
\usepackage[a4paper,vmargin={3cm,3cm},hmargin={3cm,3cm}]{geometry}
\usepackage{amsmath,amssymb,amsthm,mathtools}
\usepackage{microtype}
\usepackage{enumitem}
\usepackage[colorlinks=true,allcolors=blue]{hyperref}
\usepackage{cite}

\newtheorem{theorem}{Theorem}[section]
\newtheorem{conjecture}[theorem]{Conjecture}
\newtheorem{proposition}[theorem]{Proposition}
\newtheorem{lemma}[theorem]{Lemma}
\newtheorem{corollary}[theorem]{Corollary}

\theoremstyle{remark}
\newtheorem{remark}[theorem]{Remark}
\newtheorem{example}[theorem]{Example}

\newcommand{\N}{\mathbb N}
\newcommand{\R}{\mathbb R}
\newcommand{\Z}{\mathbb Z}
\newcommand{\C}{\mathbb C}
\newcommand{\e}{\mathrm e}
\newcommand{\norm}[1]{\lVert #1\rVert}
\newcommand{\abs}[1]{\lvert #1\rvert}
\newcommand{\dist}[1]{\left\lVert #1\right\rVert_{\R/\Z}}
\renewcommand{\le}{\leqslant}
\renewcommand{\ge}{\geqslant}

\numberwithin{equation}{section}
\allowdisplaybreaks[1]
\title[Erd\H{o}s--Wintner theorem for second-order bases]
{An Erd\H{o}s--Wintner theorem for second-order linear recurrent bases}
\author{Johann Verwee}

\begin{document}

\begin{abstract}
Let $a,b$ be integers with $1\le b\le a$, and let
\[
  G_0=1,\qquad G_1=a+1,\qquad
  G_{n+2}=aG_{n+1}+bG_n.
\]
For real-valued functions which are additive in the greedy $G$-digits, we prove
a necessary-and-sufficient criterion for the existence of a limiting
distribution.  The criterion consists of a first-order drift series and a
quadratic digit-energy series, and it recovers the Zeckendorf theorem when
$a=b=1$.  The main difficulty is necessity: a one-step transfer matrix detects
all non-maximal digit values, while the maximal digit becomes visible only in a
two-step product.  A weighted Euclidean norm symmetrizes the untwisted
companion matrix and makes both contractions occur at the true Perron scale.
Sufficiency follows from a two-dimensional Perron product lemma with
square-summable transverse perturbations and a convergent, not necessarily
absolutely convergent, Perron drift.
The limiting characteristic function admits a scalar infinite-product
representation in a neighbourhood of the origin and a global matrix-product
representation at arbitrary frequencies.
\end{abstract}

\maketitle
\tableofcontents

\section{Introduction}

The classical theorem of Erd\H{o}s and Wintner characterizes the additive
arithmetical functions which possess a limiting distribution
\cite{ErdosWintner}. Tenenbaum and the present author subsequently obtained
an effective form of this theorem, giving quantitative estimates for the
convergence to the limiting distribution \cite{TenenbaumVerwee}. In the
digital setting, Delange proved the analogous necessary-and-sufficient
criterion for $q$-additive functions \cite{Delange}. Quantitative versions,
together with extensions to Cantor and Zeckendorf numeration, were later
obtained by Drmota and the present author \cite{DrmotaVerwee}. More recently,
the present author removed the boundedness assumption on the Cantor base and
obtained an effective Erd\H{o}s--Wintner theorem in this more general setting
\cite{VerweeCantor}.

For canonical numeration systems attached to linear recurrences with
decreasing coefficients, Barat and Grabner established broad sufficient
conditions and developed the transfer-recurrence approach used here
\cite{BaratGrabner}. Their proof uses a Poincar\'e--Perron perturbation theorem
of Kooman whose coefficient hypothesis is absolute summability; the remark
following their Theorem~4 identifies this perturbative step as the obstruction
to obtaining necessity by the same argument. In the Zeckendorf case, Drmota
and the present author nevertheless obtained a complete necessary-and-sufficient
criterion \cite{DrmotaVerwee}. In the closing remark of that paper, the
extension to constant-coefficient recurrent bases was left open, with the
recurrence shape $G_{n+2}=aG_{n+1}+G_n$ singled out as a natural next case.

An earlier preprint by the present author considered this problem in greater
generality \cite{VerweeLRBWithdrawn} and was subsequently withdrawn. The present
paper is self-contained and establishes the second-order result independently.

The present paper treats the second-order family
\[
  G_{n+2}=aG_{n+1}+bG_n,\qquad 1\le b\le a.
\]
The point is not to obtain another sufficient criterion. Such criteria are
already available in substantially greater generality. The new issue is the
converse: starting only from the existence of a limiting distribution, we
recover both the quadratic digit energy and the first-order drift. This
extends the necessary part of the Zeckendorf theorem from $(a,b)=(1,1)$ to an
entire two-parameter family.

The key new feature is that the transfer recurrence treats the maximal digit
differently from the non-maximal ones: the non-maximal digits can be controlled
at one step, whereas the maximal digit requires grouping two consecutive
transfer matrices.  A weighted norm adapted to the Perron direction makes
these estimates sharp enough to recover the quadratic condition, while the
first-order part of the same recurrence yields the drift condition.  When the
criterion holds, the limiting characteristic function has a scalar
infinite-product representation near the origin; at arbitrary frequencies it
is represented instead by the normalized transfer-matrix product.

The paper is organized as follows. Section~\hyperlink{sec:numeration}{2}
introduces the second-order numeration system, its greedy language, and the exact
transfer recurrence. Section~\hyperlink{sec:main}{3} states and proves the main
result, following the mechanism of the argument: sufficiency, passage from
natural cutoffs to arbitrary intervals, infinitesimality, the quadratic
condition, and finally the drift condition. Section~\hyperlink{sec:examples}{4}
includes an example with conditionally convergent drift, while
Section~\hyperlink{sec:special-cases}{5} records several specializations and two
immediate stability consequences. Section~\hyperlink{sec:further-directions}{6}
formulates several directions beyond order two. The proof of the perturbative
product lemma is given in Appendix~\hyperlink{app:perron-product}{A}.

\hypertarget{sec:numeration}{}
\section{Second-order recurrent numeration systems}

Fix integers $a,b$ with $1\le b\le a$, and put
\[
  G_0=1,\qquad G_1=a+1,\qquad
  G_{n+2}=aG_{n+1}+bG_n\qquad(n\ge0).
\]
Let
\[
  \alpha=\frac{a+\sqrt{a^2+4b}}2,
  \qquad
  \lambda=\frac{a-\sqrt{a^2+4b}}2=-\frac b\alpha.
\]
Then $\alpha>a$, $\abs{\lambda}<1$, and
\begin{equation}\label{eq:G-asymp}
  G_n=\kappa\alpha^n+(1-\kappa)\lambda^n,
  \qquad
  \kappa=\frac{a+2+\sqrt{a^2+4b}}{2\sqrt{a^2+4b}}>0.
\end{equation}

For an integer $N\ge1$, a digit word $(e_0,\ldots,e_{N-1})$ is called \emph{admissible} if
\begin{equation}\label{eq:admissibility}
  0\le e_j\le a,
  \qquad
  e_{j+1}=a\ \Longrightarrow\ e_j<b.
\end{equation}

\begin{proposition}\label{prop:numeration}
For every $N\ge1$, the value map
\[
  (e_0,\ldots,e_{N-1})\longmapsto\sum_{j<N}e_jG_j
\]
is a bijection from the admissible words of length $N$ onto
$\{0,\ldots,G_N-1\}$.  Consequently every non-negative integer has a unique
greedy expansion
\[
  n=\sum_{j\ge0}e_j(n)G_j
\]
with finitely many non-zero digits satisfying \eqref{eq:admissibility}.
\end{proposition}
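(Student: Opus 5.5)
The plan is to prove the bijection by induction on $N$. The induction carries along a second, auxiliary family of words, which is needed because the admissibility condition \eqref{eq:admissibility} couples each digit to the one immediately below it. Put $G_{-1}:=1$, so that $G_1=aG_0+bG_{-1}$ and the recurrence $G_{n+2}=aG_{n+1}+bG_n$ holds for all $n\ge-1$. Let $A_N$ be the set of admissible words of length $N$. Let $B_N\subset A_N$ be the subset of words whose top digit satisfies $e_{N-1}<b$; these are exactly the words that may sit below a digit $a$. I will prove simultaneously that the value map sends $A_N$ bijectively onto $\{0,\ldots,G_N-1\}$ and $B_N$ bijectively onto $\{0,\ldots,bG_{N-1}-1\}$.

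For $N=1$ the only constraint is $0\le e_0\le a$, and $G_1-1=a$. Thus $A_1\to\{0,\ldots,a\}$ and $B_1\to\{0,\ldots,b-1\}=\{0,\ldots,bG_0-1\}$ are bijections.

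For the inductive step, take $N\ge2$ and split according to the top digit $d=e_{N-1}$. The admissibility of $(e_0,\ldots,e_{N-1})$ is equivalent to two conditions: $(e_0,\ldots,e_{N-2})\in A_{N-1}$, and $(e_0,\ldots,e_{N-2})\in B_{N-1}$ whenever $d=a$. This is because the only condition involving $e_{N-1}$ is the implication $e_{N-1}=a\Rightarrow e_{N-2}<b$.

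\begin{itemize}
\item For $d<a$, the words with top digit $d$ map, by the induction hypothesis, bijectively onto $dG_{N-1}+\{0,\ldots,G_{N-1}-1\}$.
\item For $d=a$, they map onto $aG_{N-1}+\{0,\ldots,bG_{N-2}-1\}$.
\end{itemize}

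These $a+1$ blocks of integers are consecutive and pairwise disjoint. Their union is $\{0,\ldots,aG_{N-1}+bG_{N-2}-1\}=\{0,\ldots,G_N-1\}$, which proves the claim for $A_N$.

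For $B_N$, the top digit ranges over $d<b\le a$, so $d\neq a$ and the lower word is an arbitrary element of $A_{N-1}$. The blocks $dG_{N-1}+\{0,\ldots,G_{N-1}-1\}$ with $0\le d<b$ then tile $\{0,\ldots,bG_{N-1}-1\}$. The only points needing care are this use of $b\le a$, which guarantees that a digit $d<b$ never triggers the constraint, and the convention $G_{-1}=1$, which makes the base case match the recurrence.

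For the consequence about expansions, first note that appending a zero on top preserves both admissibility and value. Hence the finite-support admissible expansions of $n$ are exactly the length-$N$ admissible words of value $n$, padded with zeros, for any $N$ with $G_N>n$. Since $G_N\to\infty$, existence follows. Uniqueness follows from injectivity at a length large enough to contain the supports of both expansions.

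Finally, the block decomposition above shows that the top digit of the word representing $n<G_N$ is $\min\{a,\lfloor n/G_{N-1}\rfloor\}$. Repeating this on the remainder, each digit is the largest one allowed at its position. So this expansion is the greedy one, and the greedy and admissible descriptions coincide. I do not expect a genuine obstacle here. The main risk is bookkeeping: getting the auxiliary set $B_N$ and the index conventions right so that the interval lengths add up exactly to $G_N$.
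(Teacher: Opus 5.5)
Your proof is correct and is essentially the paper's argument. Both decompose by the top digit and tile $\{0,\ldots,G_N-1\}$ using $G_N=aG_{N-1}+bG_{N-2}$. Your auxiliary family $B_N$, bijective onto $\{0,\ldots,bG_{N-1}-1\}$, packages into a one-step simultaneous induction what the paper does by a two-step induction, where the case of top digit $a$ is split further according to the next digit $k<b$.

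The only slip is the side remark that $G_{-1}:=1$ gives $G_1=aG_0+bG_{-1}$. This is false for $b\ge2$; one would need $bG_{-1}=1$. However, your base case $N=1$ is checked directly and your inductive step invokes the recurrence only for $N\ge2$, so $G_{-1}$ is never used and the remark should simply be deleted.
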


\begin{proof}
We argue by induction on the length, allowing the empty word at length zero.
The cases $N=0$ and $N=1$ are immediate.  Suppose the assertion is known at
lengths $N$ and $N+1$.  For an admissible word of length $N+2$, either its
leading digit $e_{N+1}$ is some $k<a$, in which case the lower $N+1$ digits
are unrestricted and give exactly
\[
  kG_{N+1}+[0,G_{N+1}),
\]
or $e_{N+1}=a$, in which case admissibility forces $e_N=k<b$ and the lower
$N$ digits give exactly
\[
  aG_{N+1}+kG_N+[0,G_N).
\]
As $k$ ranges over the allowed values, these intervals are consecutive and
disjoint, and their union is
\[
  [0,aG_{N+1}+bG_N)=[0,G_{N+2}).
\]
This proves the bijection at length $N+2$.  Letting the length increase gives
a unique finite admissible expansion of every non-negative integer.  Every
prefix of this expansion has value below the next place value, again by the
bijection just proved, so the expansion is the greedy one.
\end{proof}

This is the order-two specialization of the canonical recurrent numeration
systems of Grabner--Tichy and Barat--Grabner; see
\cite{GrabnerTichy} and, in particular, the lexicographic admissibility
criterion \cite[Eq.~(1.3)]{BaratGrabner}.  In the present case that criterion
reduces exactly to \eqref{eq:admissibility}.

A function $f:\N\to\R$ is called \emph{$G$-additive} if $f(0)=0$ and,
for every $n\ge1$,
\[
  f(n)=\sum_{j\ge0}f(e_j(n)G_j).
\]
Similarly, a function
$g:\N\to\C$ is called \emph{$G$-multiplicative} if $g(0)=1$ and
\[
  g(n)=\prod_{j\ge0}g(e_j(n)G_j).
\]
For $N\ge1$ and $x\in\R$, let
\[
  F_N(x):=\frac1N\#\{0\le n<N:f(n)\le x\}.
\]
We say that a $G$-additive function $f$ has a \emph{limiting distribution} if there exists a distribution
function $F$ such that $F_N(x)\to F(x)$ at every continuity point of $F$.
Equivalently, the empirical probability measures on $[0,N)$ converge weakly.
For $t\in\R$, set
\[
  g_t(n)=\e^{itf(n)},
  \qquad
  H_n(t)=\sum_{m<G_n}g_t(m).
\]
For $c\in\{a,b\}$, define
\[
  \sigma_n(c;t)=\sum_{0\le k<c}\e^{itf(kG_n)}.
\]
The dependence on $t$ will be omitted when no confusion is possible.

The admissibility rule gives the disjoint decomposition
\begin{equation}\label{eq:block-decomp}
[0,G_{n+2})=
  \bigsqcup_{0\le k<a}\bigl(kG_{n+1}+[0,G_{n+1})\bigr)\sqcup
  \bigsqcup_{0\le k<b}
  \bigl(aG_{n+1}+kG_n+[0,G_n)\bigr).
\end{equation}
Indeed, a leading digit smaller than $a$ leaves the lower positions
unrestricted, whereas a leading digit equal to $a$ forces the following digit
to lie in $\{0,\ldots,b-1\}$.  Since $g_t$ is $G$-multiplicative, \eqref{eq:block-decomp} yields
\begin{equation}\label{eq:H-rec}
  H_{n+2}(t)
  =\sigma_{n+1}(a;t)H_{n+1}(t)
   +g_t(aG_{n+1})\sigma_n(b;t)H_n(t).
\end{equation}
This is the order-two specialization of the general transfer recurrence
in \cite[Eq.~(2.6)]{BaratGrabner}. Equivalently, for $n\ge1$,
\begin{equation}\label{eq:matrix-rec}
  \binom{H_{n+1}(t)}{H_n(t)}
  =A_n(t)\binom{H_n(t)}{H_{n-1}(t)},
  \qquad
  A_n(t)=
  \begin{pmatrix}
    \sigma_n(a;t)&g_t(aG_{n})\sigma_{n-1}(b;t)\\
    1&0
  \end{pmatrix}.
\end{equation}
At $t=0$ this matrix is
\[
  A=\begin{pmatrix}a&b\\1&0\end{pmatrix}.
\]
A right and a left Perron eigenvector are
\begin{equation}\label{eq:Perron-vectors}
  r=\binom{\alpha}{1},
  \qquad
  \ell^{\mathsf T}=\begin{pmatrix}1&b/\alpha\end{pmatrix}.
\end{equation}

\hypertarget{sec:main}{}
\section{Main theorem}

The quantity $D_n$ is the first-order drift of the $n$-th transfer matrix
in the Perron direction.  We put
\begin{equation}\label{eq:Dn}
  D_n
  =\alpha\sum_{1\le k<a}f(kG_n)
   +\sum_{1\le k<b}f(kG_{n-1})
   +b f(aG_n)
  \qquad(n\ge1).
\end{equation}
The omission of the zero digit is harmless because $f(0)=0$.

\begin{theorem}\label{thm:main}
Let $f$ be a real-valued $G$-additive function. Then $f$ has a limiting
distribution if and only if the two series
\begin{align}
  &\sum_{n\ge1}D_n,\tag{S1}\label{eq:S1}\\
  &\sum_{n\ge0}\sum_{1\le k\le a}f(kG_n)^2\tag{S2}\label{eq:S2}
\end{align}
converge.

In this case, if $\Phi$ denotes the characteristic function of the limiting
distribution, then there exists $t_0>0$ such that $H_n(t)\ne0$ for every
$n\ge0$ and $\abs t\le t_0$.  For such $t$,
\begin{equation}\label{eq:ratio-product-Phi}
  \Phi(t)
  =\frac1\kappa\prod_{n\ge1}\frac{r_n(t)}{\alpha},
  \qquad
  r_n(t)=\frac{H_n(t)}{H_{n-1}(t)}.
\end{equation}
\end{theorem}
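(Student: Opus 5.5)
The proof splits into sufficiency and necessity; both rest on the transfer recurrence \eqref{eq:matrix-rec}, in the form $H_n(t)/G_n$, which is the characteristic function of the empirical law on $[0,G_n)$.

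\emph{Sufficiency.} Assume \eqref{eq:S1} and \eqref{eq:S2} converge. Expand each entry of $A_n(t)$ around $A$:
\[
\e^{itf(kG_n)}=1+itf(kG_n)+O\bigl(t^2f(kG_n)^2\bigr).
\]
This gives $A_n(t)=A+itE_n+R_n(t)$, where $E_n$ is the matrix of first-order digit values. By \eqref{eq:S2}, $\sum_n\norm{R_n(t)}<\infty$ and $\sum_n\norm{E_n}^2<\infty$, locally uniformly in $t$. The Perron component of the first-order term is $\ell^{\mathsf T}E_nr/(\ell^{\mathsf T}r)$. This should equal $D_n$ up to a fixed positive constant, and \eqref{eq:Dn} is evidently defined for exactly this purpose. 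Apply the two-dimensional Perron product lemma of Appendix~A, which allows square-summable transverse perturbations and a merely convergent Perron drift. It gives
\[
\alpha^{-n}A_n(t)\cdots A_1(t)\to \Pi(t)\,r\ell^{\mathsf T},
\]
locally uniformly in $t$, with $\Pi$ continuous and $\Pi(0)=1$. Hence $H_n(t)/G_n\to\Phi(t)$, using \eqref{eq:G-asymp} for the normalization, and $\Phi$ is continuous at $0$.

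To pass from the cutoffs $G_n$ to arbitrary $N$, write $N$ by its greedy expansion (Proposition~\ref{prop:numeration}). Then $[0,N)$ splits into blocks of the form prefix${}+[0,G_j)$, and on each block $g_t$ factors as $g_t(\text{prefix})\,H_j(t)$. Blocks with $j$ small carry negligible weight, since $G_j/N$ is geometric in $j$. For blocks with $j$ large, the prefix phases $\e^{itf(\text{prefix})}$ involve only high digits $f(e_iG_i)$ with $i>j$. These phases tend to $1$ by the Cauchy criterion for the digit contributions, which is itself obtained from (S1) and (S2). Lévy's continuity theorem then gives the limiting distribution.

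Near $t=0$, each $A_n(t)$ is close to $A$ in the weighted norm, so the positive cone is approximately invariant and $H_n(t)\ne0$ for $\abs t\le t_0$. Telescoping,
\[
\frac{H_n}{G_n}=\frac{H_0}{G_0}\prod_{m\le n}\frac{r_m}{G_m/G_{m-1}},
\]
and $G_n\sim\kappa\alpha^n$ yields \eqref{eq:ratio-product-Phi}.

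\emph{Necessity.} Assume a limiting distribution exists, with characteristic function $\Phi$. Its restriction to the cutoffs gives $H_n(t)/G_n\to\Phi(t)$, and $\abs{\Phi}\ge1/2$ on some $\abs t\le t_0$. I would proceed in three steps.

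First, \emph{infinitesimality}. Compare $[0,G_{n+1})$ with the shifted block $kG_n+[0,G_n)$, which occupies a positive proportion of it. This forces $\e^{itf(kG_n)}\to1$ for each fixed $t$, hence $f(kG_n)\to0$ for every $1\le k\le a$.

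Second, the \emph{quadratic condition} \eqref{eq:S2}. I expect this to be the main obstacle. The vector $v_n=(H_n,H_{n-1})^{\mathsf T}/G_n$ has norm bounded below. Take the weighted Euclidean norm in which $A$ is symmetric; the weights come from $r$ and $\ell$, the diagonal choice being $\mathrm{diag}(1,b)$. In this norm one computes
\[
\norm{A_n(t)}\le\alpha\Bigl(1-c\,t^2\sum_{k<a}\bigl(f(kG_n)-\overline f\bigr)^2\Bigr),
\]
which measures the spread among the non-maximal digits. The maximal digit appears only in the $(1,2)$ entry, so it is controlled through the two-step product: there the $(1,2)$ entry of $A_{n+1}$ meets $\sigma_n(a)$, and
\[
\norm{A_{n+1}(t)A_n(t)}\le\alpha^2\bigl(1-c\,t^2\,\mathrm{Var}_n\bigr),
\]
where $\mathrm{Var}_n$ also involves $f(aG_n)$ relative to the smaller digits, including $f(0)=0$. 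Multiplying these bounds, boundedness of $\abs{H_n}/G_n$ from below forces $\sum_n t^2\mathrm{Var}_n<\infty$. Since the digit $0$ always enters with value $0$, the variance controls $\sum_k f(kG_n)^2$. The delicate point is verifying that the symmetrizing norm gives contraction at exactly the scale $\alpha$, not merely an operator norm greater than $\alpha$.

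Third, the \emph{drift} \eqref{eq:S1}. Once \eqref{eq:S2} holds, the Perron product lemma applies to the second-order remainders. The limit then equals $\exp\bigl(ic\,t\sum_{n\le N}D_n\bigr)$ times a convergent factor. Convergence of $H_n/G_n$ to a nonzero limit, at one small $t$ and at $t/2$ (to remove the $2\pi$ ambiguity), forces $\sum_{n\le N}D_n$ to converge.
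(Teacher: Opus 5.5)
Your sufficiency argument and your proof of \eqref{eq:S2} follow the paper. You use Proposition~\ref{prop:product} at the cutoffs $G_n$ (in fact $\ell^{\mathsf T}E_nr=D_n$ exactly), a greedy block decomposition for general $N$, and the norm with weights $\mathrm{diag}(1,b)$, which gives a one-step deficit for the digits $1,\dots,a-1$ and a two-step deficit for the digit $a$. The point you call delicate is settled by entrywise domination. Put $W=\mathrm{diag}(\sqrt b,1)$ and $C=W^{-1}AW$. Then $\abs{W^{-1}A_n(t)W}$ is dominated by the non-negative symmetric matrix with rows $(\abs{\sigma_n(a;t)},\sqrt b)$ and $(\sqrt b,0)$. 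Likewise $\abs{W^{-1}A_{n+1}(t)A_n(t)W}$ is dominated by $C^2$ with its $(1,1)$ entry $a^2+b$ replaced by the modulus of the actual $(1,1)$ entry; that entry is a sum of $a^2+b$ unimodular terms including the phases $0$ and $tf(aG_{n+1})$. The Euclidean norm is then at most an explicit Perron root that decreases at a definite rate in that entry. Disjoint pairs only see $f(aG_m)$ for one parity of $m$, so you must use the even and the odd pairing separately.

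You depart from the paper in the drift step. The paper works with the ratios $r_n=H_n/H_{n-1}$ on $\abs t\le t_0$. It shows that $\varepsilon_n=r_n-\alpha$ is square-summable with convergent sum, and sums the linearized ratio identity to get convergence of $\sum u_n$. You instead strip off the drift as a scalar phase and reuse Proposition~\ref{prop:product}. Made explicit: with $\theta_n=tD_n/(\alpha\,\ell^{\mathsf T}r)$,
\[
\ell^{\mathsf T}\bigl(\e^{-i\theta_n}A_n(t)-A\bigr)r=u_n(t)-itD_n+O\bigl(t^2D_n^2+\abs{tD_n}\abs{u_n(t)}\bigr),
\]
which is absolutely summable under \eqref{eq:S2} by \eqref{eq:u-expansion}, while the perturbations stay in $\ell^2$. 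So Proposition~\ref{prop:product} gives $\alpha^{-N}H_N(t)=\e^{i\Theta_N}\Psi_N(t)$ with $\Theta_N=\sum_{1\le n<N}\theta_n$, $\Psi_N\to\Psi$, and $\abs{\Psi}=\kappa\abs{\Phi}>0$. Your route is shorter and makes the necessity of \eqref{eq:S1} a corollary of the product lemma already used for sufficiency. It also shows that under \eqref{eq:S2} alone, the cutoff sums are a pure drift phase times a convergent factor at every $t$. The paper's ratio argument stays at the scalar level and parallels the Zeckendorf proof.

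Your argument for $H_n(t)\ne0$ also differs from the paper's, which uses locally uniform convergence plus continuity at finitely many indices. Yours works once phrased for complex data rather than a real positive cone. For instance, for $\abs t$ small (uniformly in $n$, since $\sup_{n,k}\abs{f(kG_n)}<\infty$), each ratio map $z\mapsto\sigma_n(a;t)+g_t(aG_n)\sigma_{n-1}(b;t)/z$ preserves the half-plane $\operatorname{Re}z\ge a/2$, which contains $r_1(t)=H_1(t)$.

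Two of your justifications are wrong as written, though you already have what is needed to fix them.

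First, there is no Cauchy criterion for $\sum_{j<i\le J}f(e_iG_i)$ over arbitrary admissible strings. In the paper's example $f(aG_n)=(-1)^n/(b(n+1))$, take the string with digit $a$ at even positions and $0$ at odd ones. This sum is then roughly $(2b)^{-1}\log(J/j)$, which is unbounded. The correct split is by $J-j$. Blocks with $J-j>h$ carry total weight $\ll\alpha^{-h}$. For the top $h$ positions, each block coefficient is a product of at most $h$ digit phases times $\sum_{c<e_j}g_t(cG_j)$, and all of these tend to their trivial values by infinitesimality alone, as in Lemma~\ref{lem:all-cutoffs}.

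Second, convergence at $t$ and at $t/2$ does not remove the $2\pi$ ambiguity, since it only determines $\Theta_N$ modulo $4\pi$. What removes it is $D_n\to0$: the increments $\theta_n$ tend to zero, so convergence of $\e^{i\Theta_N}$ forces convergence of $\Theta_N$ at a single $t\ne0$. This is the counterpart of the branch tracking in Lemma~\ref{lem:epsilon}.

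Similarly, your infinitesimality comparison only works for $\abs t\le t_0$, where $\Phi(t)\ne0$. For $k=a$ the relevant block is $aG_n+[0,G_{n-1})$, not $aG_n+[0,G_n)$. Passing from $\e^{itf(kG_n)}\to1$ to $f(kG_n)\to0$ needs the whole interval, for example by integrating $1-\cos(tf(kG_n))$ over it.
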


The scalar infinite-product representation in Theorem~\ref{thm:main} is local.
At arbitrary frequencies the ratios need not be defined, since the limiting
characteristic function may vanish.  Under the equivalent conditions of
Theorem~\ref{thm:main}, the corresponding global representation is
\begin{equation}\label{eq:matrix-product-Phi}
  \Phi(t)
  =\frac1\kappa\lim_{n\to\infty}
   \alpha^{-n}
   \begin{pmatrix}1&0\end{pmatrix}
   A_{n-1}(t)\cdots A_1(t)
   \binom{H_1(t)}{1}
  \qquad(t\in\R).
\end{equation}
Thus the scalar infinite product in \eqref{eq:ratio-product-Phi} is a local
formula near the origin, while \eqref{eq:matrix-product-Phi} remains valid
globally and does not require division by the finite characteristic sums.

Barat and Grabner's Theorem~4 gives a sufficient criterion for the more
general decreasing-coefficient recurrence setting
\cite[Theorem~4]{BaratGrabner}.  Their perturbative step, however, uses
absolute summability of the coefficient perturbations, and they explicitly
note this limitation in the remark following that theorem.  The condition
\eqref{eq:S1} used here allows the drift in the Perron direction to converge
conditionally, so that this regime is not covered in general by that argument.

The proof has four steps, and the two canonical series enter the argument in
different ways.

\begin{enumerate}[label=\textup{(\roman*)}]
\item Under \eqref{eq:S2}, the transfer matrices are an $\ell^2$ perturbation
of the constant companion matrix.  Under \eqref{eq:S1}, the corresponding
Perron drift series converges.  Proposition~\ref{prop:product} then gives convergence at the natural
cutoffs $G_n$.
\item A greedy block decomposition passes from the cutoffs $G_n$ to every
initial interval $[0,N)$.
\item Conversely, local non-vanishing of $\Phi$ near the origin first forces
all digit values $f(kG_n)$ to tend to zero.  The weighted one-step and two-step
contractions then yield \eqref{eq:S2}.
\item Once \eqref{eq:S2} is known, the ratios are used on the same
non-vanishing interval to recover the summability of the Perron drift and hence
\eqref{eq:S1}.
\end{enumerate}

\begin{remark}
With the change of index $n=j+1$, the drift condition \eqref{eq:S1} is exactly
\[
  \sum_{j\ge0}\left(
    \alpha\sum_{1\le k<a}f(kG_{j+1})
    +\sum_{1\le k<b}f(kG_j)
    +b f(aG_{j+1})
  \right),
\]
which is the second-order drift appearing naturally in the linearization of
the recurrence.  Thus no averaging factor or boundary correction is hidden in
the notation $D_n$.
\end{remark}

\subsection{Sufficiency}\label{sec:suff}

We first prove the implication from \eqref{eq:S1} and \eqref{eq:S2} to the
existence of a limiting distribution.  The argument has two steps: first the
natural cutoffs $G_n$, and then arbitrary initial intervals.

We begin by recording the perturbative result used in the proof.  Its formulation is
specialized to the two-dimensional situation needed here.  General
asymptotic-integration theories for perturbed difference systems go back, in
particular, to Coffman and to Benzaid--Lutz
\cite{Coffman,BenzaidLutz}.  Barat--Grabner instead invoke a theorem of Kooman
under a stronger absolute-summability hypothesis
\cite[Lemma~2]{BaratGrabner}.  Since we need an $\ell^2$ perturbation together
with convergence, but not necessarily absolute convergence, in the Perron
direction, we keep the precise order-two statement and its proof.

\begin{proposition}\label{prop:product}
Let $A\in\C^{2\times2}$ be diagonalizable with eigenvalues $\alpha$ and
$\lambda$, where $\alpha>0$ and $\abs{\lambda}<\alpha$.  Let $r$ and $\ell$ be
right and left $\alpha$-eigenvectors with $\ell^{\mathsf T}r\ne0$, and fix any
matrix norm on $\C^{2\times2}$.  Suppose that $A_n=A+E_n$ and
\[
  \sum_{n\ge0}\norm{E_n}^2<\infty,
  \qquad
  \sum_{n\ge0}\ell^{\mathsf T}E_n r\quad\text{converges}.
\]
Then the normalized products
\[
  \alpha^{-N}A_{N-1}\cdots A_0
\]
converge in $\C^{2\times2}$ as $N\to\infty$.

Suppose now that $A_n=A_n(t)$ depends continuously on $t$, and write
$E_n(t)=A_n(t)-A$.  If, for every $T>0$,
\[
  \sum_{n\ge0}\sup_{|t|\le T}\norm{E_n(t)}^2<\infty
\]
and
\[
  \sum_{n\ge0}\ell^{\mathsf T}E_n(t)r
\]
converges uniformly for $|t|\le T$, then the normalized products converge
locally uniformly in $t$, and the limiting matrix is continuous.
\end{proposition}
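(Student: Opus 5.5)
We diagonalize $A$ and work in its eigenbasis. Let $S=(r\ s)$, where $s$ is a right $\lambda$-eigenvector, and normalize $\ell$ so that $\ell^{\mathsf T}r=1$; then the first row of $S^{-1}$ is $\ell^{\mathsf T}$. Write
\[
  S^{-1}A_nS=\begin{pmatrix}\alpha+p_n&q_n\\ u_n&\lambda+v_n\end{pmatrix}.
\]
Here $p_n=\ell^{\mathsf T}E_nr$, all entries are $O(\norm{E_n})$, and hence all lie in $\ell^2$. Since all norms on $\C^{2\times2}$ are equivalent, the choice of norm is irrelevant. The task reduces to showing that $\alpha^{-N}$ times the product of these conjugated matrices converges.

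The main obstacle is that $\sum p_n$ only converges, so $\prod(1+p_n/\alpha)$ need not converge: its logarithm has a quadratic term $\sum p_n^2/2\alpha^2$, which converges absolutely, but the off-diagonal couplings contribute further second-order corrections $q_nu_{n+\cdot}$, which are also merely $\ell^1$ by Cauchy--Schwarz. To handle this, I would perform one step of asymptotic diagonalization, in the spirit of Benzaid--Lutz and Coffman. Seek $T_n=I+Y_n$ with $Y_n$ off-diagonal and $Y_n\to0$ in $\ell^2$ such that
\[
  T_{n+1}^{-1}\,S^{-1}A_nS\,T_n=\Lambda+\operatorname{diag}(p_n,v_n)+R_n,\qquad \sum\norm{R_n}<\infty .
\]
Take the lower-left entry $y_n$ of $Y_n$ to solve $\alpha y_{n+1}-\lambda y_n=u_n$, i.e.\ $y_{n+1}=\sum_{k\le n}\lambda^{n-k}u_k/\alpha^{n-k+1}$. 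This is a convolution with a geometrically decaying kernel, so $y\in\ell^2$ by Young's inequality. Similarly, take the upper-right entry $z_n$ from the backward sum $z_n=-\sum_{k\ge n}\lambda^{k-n}q_k/\alpha^{k-n+1}$, which is also in $\ell^2$. The residual $R_n$ then consists of products of two $\ell^2$ sequences, for instance $p_ny_n$, $q_ny_{n+1}$, and $z_{n+1}u_n$, each of which is $\ell^1$. For $n$ large, $T_n$ is invertible and $T_n\to I$.

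It remains to handle the diagonal-plus-$\ell^1$ products, for $n\ge n_0$. The first diagonal entry is $\alpha(1+p_n/\alpha)$, with $\sum p_n$ convergent and $\sum|p_n|^2<\infty$. Hence $\prod_{n\le N}(1+p_n/\alpha)$ converges to a finite limit, nonzero if no factor vanishes, because $\log(1+x)=x+O(x^2)$. The second entry satisfies $|\lambda+v_n|\le|\lambda|+o(1)<\alpha'<\alpha$ eventually, so its normalized product tends to $0$ geometrically. A Levinson-type argument then applies: write $D_n=\operatorname{diag}(\alpha+p_n,\lambda+v_n)$ and $P_N=\prod_{n_0\le n<N}D_n$ with $\alpha^{-N}P_N\to\operatorname{diag}(c,0)$. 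Since the dichotomy is exponential, $\norm{D_N\cdots D_{k+1}}\le C\alpha^{N-k}$ uniformly, and the Duhamel expansion
\[
  \prod(D_n+R_n)=P_N+\sum_k (\cdots)R_k(\cdots)
\]
converges absolutely after normalization by $\alpha^{-N}$, using $\sum\norm{R_k}<\infty$ and Gronwall. By dominated convergence in $k$, the normalized product converges. Undoing the conjugations by $T_N\to I$, $T_{n_0}$ and $S$, and multiplying by the fixed initial factor $A_{n_0-1}\cdots A_0$, gives convergence of $\alpha^{-N}A_{N-1}\cdots A_0$.

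For the parametric statement, every construction above is explicit and depends continuously on $t$. The bounds on $y_n$, $z_n$ and $R_n$ are controlled by $\sup_{|t|\le T}\norm{E_n(t)}$, so they hold uniformly. The index $n_0$ can be chosen uniformly because $\sup_t\norm{E_n(t)}\to0$. Uniform convergence of $\sum p_n(t)$ together with uniform square-summability gives uniform convergence of the scalar products, and the Duhamel tail estimates are uniform as well. The normalized products are therefore continuous functions of $t$ converging uniformly on $|t|\le T$, so the limit is continuous.
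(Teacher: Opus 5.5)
Your proof is correct, but it takes a different route from the paper's. Both arguments start by conjugating with $S=(r,s)$, so that the Perron diagonal entry becomes $\alpha+\ell^{\mathsf T}E_nr/(\ell^{\mathsf T}r)$. They then diverge.

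\emph{The paper's route.} The paper removes only the lower-left coupling, and it does so exactly. It solves the forward Riccati recurrence for $z_n$, starting from $z_{n_0}=0$. By induction $|z_n|\le1$, and Young's inequality gives $z_n\in\ell^2$. It then conjugates by the unipotent matrix $\begin{pmatrix}1&0\\z_n&1\end{pmatrix}$. The result is an upper-triangular product with diagonal entries $\mu_n=1+\xi_n+\beta_nz_n$ and $\nu_n\to\vartheta$. The coupling $\beta_n$ is left in place, because in the triangular product it enters only through an explicit sum damped by $\prod\nu_k$.

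\emph{Your route.} You use the linear Harris--Lutz/Benzaid--Lutz conditioning transformation $I+Y_n$. You solve both homological equations, forward for $y_n$ and backward for $z_n$. This leaves a diagonal system plus an $\ell^1$ remainder, which you handle with a Levinson-type Duhamel--Gronwall argument.

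\emph{What each buys.} The paper's proof is self-contained with explicit formulas. Its transformation is causal and invertible at every step, there is no anticipating series, and no Gronwall or dominated-convergence step is needed. This also makes continuity in $t$ immediate from the forward recurrence. Your proof is more modular: it reduces the problem to a standard asymptotic-integration statement and adapts more readily to a block version in higher dimension. The costs are a non-causal series for $z_n$, invertibility of $I+Y_n$ only for large $n$, and the extra Levinson step.

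\emph{Two presentation points.} First, $\prod(1+p_n/\alpha)$ does converge, as you yourself show later. The real obstacle is that the off-diagonal couplings feed second-order terms into the Perron entry of the full product. Second, the uniform bound $\norm{D_N\cdots D_{k+1}}\le C\alpha^{N-k}$ on the Perron coordinate comes from the uniformly bounded partial sums $\sum_{k<n\le N}\log(1+p_n/\alpha)$, not from an exponential dichotomy. The second coordinate only needs to be eventually dominated by some $\alpha'<\alpha$.
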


The square-summability hypothesis controls the quadratic interactions of the
perturbation, while $\ell^{\mathsf T}E_n r$ records its first-order drift
along the Perron line.  The
proof is given in the appendix; keeping it separate allows the
application to the digit matrices to remain transparent.

For $n\ge1$, set
\begin{align*}
  p_n(t)&=\sigma_n(a;t)-a,\\
  q_n(t)&=g_t(aG_n)\sigma_{n-1}(b;t)-b,\\
  u_n(t)&=\alpha p_n(t)+q_n(t)
          =\ell^{\mathsf T}(A_n(t)-A)r.
\end{align*}

\begin{lemma}\label{lem:taylor-u}
Assume \eqref{eq:S2}.  For every $T>0$,
\begin{align}
  \sum_{n\ge1}&\sup_{\abs t\le T}\norm{A_n(t)-A}^2<\infty,\notag\\
  u_n(t)&=itD_n+O_{a,b}\!\left(
    T^2\sum_{1\le k\le a}
      \bigl(f(kG_n)^2+f(kG_{n-1})^2\bigr)
  \right).\label{eq:u-expansion}
\end{align}
Consequently, if the series \eqref{eq:S1} also converges, then
$\sum_n u_n(t)$ converges locally uniformly in $t$.
\end{lemma}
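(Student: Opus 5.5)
The plan is to write every entry of $A_n(t)-A$ as a finite sum of terms $\e^{itx}-1$ with $x$ a digit value, and to apply the elementary bounds
\[
  \abs{\e^{ix}-1}\le\abs x,
  \qquad
  \abs{\e^{ix}-1-ix}\le \tfrac12x^2 .
\]
Only two entries of $A_n(t)-A$ are non-zero, namely $p_n(t)$ and $q_n(t)$. Since all norms on $\C^{2\times2}$ are equivalent, it suffices to bound $\abs{p_n}^2+\abs{q_n}^2$.

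For the first entry,
\[
  p_n(t)=\sum_{1\le k<a}\bigl(\e^{itf(kG_n)}-1\bigr),
\]
so Cauchy--Schwarz gives $\sup_{\abs t\le T}\abs{p_n(t)}^2\le aT^2\sum_{1\le k<a}f(kG_n)^2$. For the second entry, write $g_t(aG_n)=1+(\e^{itf(aG_n)}-1)$ and $\sigma_{n-1}(b;t)=b+\sum_{1\le k<b}(\e^{itf(kG_{n-1})}-1)$. Then
\[
  q_n=b\bigl(\e^{itf(aG_n)}-1\bigr)+\sum_{1\le k<b}\bigl(\e^{itf(kG_{n-1})}-1\bigr)+\bigl(\e^{itf(aG_n)}-1\bigr)\sum_{1\le k<b}\bigl(\e^{itf(kG_{n-1})}-1\bigr).
\]
In the cross term, the first factor is bounded by $2$ in absolute value and the second is bounded linearly. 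Hence $\abs{q_n}\ll_{a,b}T\bigl(\abs{f(aG_n)}+\sum_{1\le k<b}\abs{f(kG_{n-1})}\bigr)$, which gives $\sup_{\abs t\le T}\abs{q_n}^2\ll_{a,b}T^2\sum_{1\le k\le a}\bigl(f(kG_n)^2+f(kG_{n-1})^2\bigr)$. Summing over $n\ge1$, each index $n$ occurs at most twice, so \eqref{eq:S2} yields the first assertion.

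For \eqref{eq:u-expansion}, expand to first order. We have
\[
  \alpha p_n=it\alpha\sum_{1\le k<a}f(kG_n)+O\bigl(T^2\textstyle\sum_k f(kG_n)^2\bigr),
\]
since $\alpha$ depends only on $a,b$. The two linear terms of $q_n$ contribute $itbf(aG_n)+it\sum_{1\le k<b}f(kG_{n-1})$, with quadratic errors of the same shape. The cross term is a product of two first-order small quantities, so it is bounded by
\[
  T^2\abs{f(aG_n)}\sum_{1\le k<b}\abs{f(kG_{n-1})}\ll T^2\Bigl(f(aG_n)^2+\sum_{1\le k<b}f(kG_{n-1})^2\Bigr)
\]
by $2xy\le x^2+y^2$. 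Collecting the linear terms gives exactly $itD_n$ as defined in \eqref{eq:Dn}. The identity $u_n=\ell^{\mathsf T}(A_n-A)r$ with $r=(\alpha,1)^{\mathsf T}$ and $\ell^{\mathsf T}=(1,b/\alpha)$ is immediate, because $A_n-A$ has non-zero entries only in its first row.

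For the consequence, write $u_n(t)=itD_n+R_n(t)$, where
\[
  \sup_{\abs t\le T}\abs{R_n(t)}\ll T^2\sum_k\bigl(f(kG_n)^2+f(kG_{n-1})^2\bigr).
\]
By \eqref{eq:S2} this is summable, so $\sum_n R_n$ converges uniformly on $\abs t\le T$ by the Weierstrass M-test. The remaining part $\sum_n itD_n$ converges uniformly for $\abs t\le T$ because its partial-sum tails are bounded by $T\,\bigl|\sum_{n>M}D_n\bigr|\to0$ under \eqref{eq:S1}. There is no real obstacle in this argument. The only point needing care is to keep the cross term in $q_n$ at quadratic order rather than bounding it crudely by a linear quantity, since a linear bound would spoil the expansion \eqref{eq:u-expansion}.
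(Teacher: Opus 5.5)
Your proof is correct and follows essentially the same route as the paper: you expand the first row of $A_n(t)-A$ entrywise using $\abs{\e^{ix}-1}\le\abs{x}$ and $\abs{\e^{ix}-1-ix}\le x^2/2$, then conclude with equivalence of norms, condition \eqref{eq:S2}, and a uniform-tail/M-test argument for the consequence. The only cosmetic difference is in the upper-right entry. The paper writes it directly as a sum of the $b$ terms $\e^{it(f(aG_n)+f(kG_{n-1}))}-1$ with combined phases, so the quadratic remainder comes out of the second inequality at once; you instead split $g_t(aG_n)\sigma_{n-1}(b;t)-b$ into two linear pieces plus a product term, which you correctly keep at quadratic order via $2xy\le x^2+y^2$.
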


\begin{proof}
The basic second-order exponential estimate is the same one used in
\cite[Lemma~1]{BaratGrabner}.  In the present notation, we use the uniform
inequalities
\[
  \abs{\e^{ix}-1}\le\abs{x},
  \qquad
  \abs{\e^{ix}-1-ix}\le\frac{x^2}{2}
  \qquad(x\in\R).
\]
The upper-left entry of $A_n(t)-A$ is a sum of terms of the form
$\e^{itf(kG_n)}-1$, while the upper-right entry is a sum of terms
\[
  \e^{it(f(aG_n)+f(kG_{n-1}))}-1\qquad(0\le k<b).
\]
Using $|x+y|^2\le2x^2+2y^2$, the first exponential inequality above, and
the equivalence of norms on $\C^{2\times2}$, condition \eqref{eq:S2}
therefore implies the first assertion.

For the linear term, we have
\[
  p_n(t)=it\sum_{1\le k<a}f(kG_n)
  +O_a\left(T^2\sum_{1\le k<a}f(kG_n)^2\right)
\]
uniformly for $|t|\le T$.  Similarly,
\[
  q_n(t)=it\left(
    \sum_{1\le k<b}f(kG_{n-1})+b f(aG_n)
  \right)
  +O_{a,b}\left(T^2\sum_{1\le k\le a}
    \bigl(f(kG_n)^2+f(kG_{n-1})^2\bigr)\right).
\]
Combining these expansions in $u_n=\alpha p_n+q_n$ gives
\eqref{eq:u-expansion}.  If \eqref{eq:S1} converges, its linear contribution
is locally uniformly convergent in $t$, while the displayed quadratic
remainder is locally uniformly absolutely summable.
\end{proof}

We now apply these estimates at the natural cutoffs.  Assume \eqref{eq:S1}
and \eqref{eq:S2}.  The quadratic condition implies
$f(kG_n)\to0$ for every $1\le k\le a$.  For the Perron vectors
\eqref{eq:Perron-vectors},
\[
  \ell^{\mathsf T}(A_n(t)-A)r=u_n(t).
\]
Lemma~\ref{lem:taylor-u} gives the required locally uniform $\ell^2$ estimate.
Its expansion of $u_n(t)$ shows that $\sum_n u_n(t)$ converges locally uniformly:
the linear part is $it\sum_n D_n$, while the quadratic remainder is locally
uniformly absolutely summable.  Proposition~\ref{prop:product} therefore gives
a locally uniform limit of $H_n(t)/\alpha^n$.  By \eqref{eq:G-asymp}, we have $G_n/\alpha^n\to\kappa>0$. Hence there is a continuous function $\Phi$, with $\Phi(0)=1$, such that
\begin{equation}\label{eq:natural-cutoff-limit}
  \frac{H_n(t)}{G_n}\longrightarrow\Phi(t)
\end{equation}
locally uniformly in $t$.

The preceding argument only treats the canonical intervals $[0,G_n)$.  We
now pass to arbitrary initial intervals.  The following lemma is the form of
\cite[Lemma~3]{BaratGrabner} needed here, with the infinitesimality condition
on the digit factors made explicit when the limiting value is non-zero.  We
include the short proof in the present order-two setting.

\begin{lemma}\label{lem:all-cutoffs}
Let $g$ be a $G$-multiplicative function with $\abs g\le1$, and suppose that
\[
  \frac1{G_n}\sum_{m<G_n}g(m)\longrightarrow L.
\]
Assume in addition that either $L=0$, or
\[
  g(kG_n)\longrightarrow1
  \qquad(1\le k\le a).
\]
Then
\[
  \frac1N\sum_{m<N}g(m)\longrightarrow L.
\]
\end{lemma}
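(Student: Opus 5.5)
Write $S(N)=\sum_{m<N}g(m)$ and $S(G_n)=G_nL+o(G_n)$. For $N$ with $G_n\le N<G_{n+1}$, we take the greedy expansion $N=\sum_{j\le n}e_jG_j$ and split $[0,N)$ into blocks, exactly as in the proof of Proposition~\ref{prop:numeration}. Scanning from the top digit downwards, each position $j$ with digit $e_j$ contributes the blocks
\[
  P_j+kG_j+[0,G_j)\qquad(0\le k<e_j),
\]
where $P_j=\sum_{i>j}e_iG_i$ is the prefix value. Admissibility of $N$'s expansion ensures that the lower part $kG_j+m$ with $m<G_j$, concatenated with the prefix, is again an admissible expansion. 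The one exception is a leading digit $a$ at position $j+1$, which forces $e_j<b$; in that case the blocks at position $j$ have $k<e_j<b$, which is still admissible. Multiplicativity then gives
\[
  S(N)=\sum_{j\le n}g(P_j)\sum_{0\le k<e_j}g(kG_j)\,S(G_j).
\]

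If $L=0$, we bound $\abs{S(N)}\le a\sum_{j\le n}\abs{S(G_j)}$. Given $\varepsilon>0$, choose $J$ with $\abs{S(G_j)}\le\varepsilon G_j$ for $j\ge J$. Since $\sum_{j\le n}G_j\le C\,G_n\le CN$ by geometric growth \eqref{eq:G-asymp}, we get $\abs{S(N)}\le aC\varepsilon N+O_J(1)$, which gives the claim.

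If $L\ne0$, we instead compare with $LN=L\sum_j e_jG_j$:
\[
  S(N)-LN=\sum_{j}\sum_{k<e_j}\Bigl(g(P_j)g(kG_j)\bigl(S(G_j)-LG_j\bigr)+LG_j\bigl(g(P_j)g(kG_j)-1\bigr)\Bigr).
\]
The first part is $o(N)$, exactly as in the case $L=0$. For the second part, the factor $g(P_j)g(kG_j)=\prod_{i>j}g(e_iG_i)\cdot g(kG_j)$ is a product of digit values. Fix $J$ large and split positions into $j\ge n-J$ and $j<n-J$. The positions $j<n-J$ contribute at most $2\abs L a\sum_{j<n-J}G_j\ll\alpha^{-J}N$. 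For the top $J$ positions, each factor involves at most $J+1$ digit values $g(eG_i)$ with $i\ge n-J\to\infty$. By the hypothesis $g(kG_i)\to1$ together with $\abs g\le1$, we have $\abs{\prod z_i-1}\le\sum\abs{z_i-1}$, so this contribution is $o_J(1)\cdot N$. Letting $n\to\infty$ and then $J\to\infty$ gives $S(N)=LN+o(N)$.

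The main obstacle is bookkeeping rather than analysis: we must check that the block decomposition of $[0,N)$ really consists of full translated intervals $[0,G_j)$ whose elements carry the prefix digits multiplicatively, i.e.\ that the concatenation of the prefix of $N$ with an arbitrary admissible word of length $j$ preceded by a digit $k<e_j$ is admissible. This holds because $k<e_j\le a$ means $k\ne a$, so no constraint is imposed on the lower digits by $k$. The constraint from position $j+1$ to $j$ is also respected, since $k<e_j$ and $e_j<b$ whenever $e_{j+1}=a$. The remaining estimates use only $G_j\asymp\alpha^j$.
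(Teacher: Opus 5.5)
Your proof is correct and follows essentially the same route as the paper: the same greedy block decomposition $S(N)=\sum_j g(P_j)\sum_{k<e_j}g(kG_j)S(G_j)$ (the paper's coefficients $C_q(N)$), the same $o(N)$ estimate for the terms $S(G_j)-LG_j$, and, when $L\ne0$, the same split into the top $J$ positions (handled by the telescoping bound for products of numbers of modulus at most one) and the lower positions (handled by the geometric decay of $G_j/G_n$). The only cosmetic difference is that you bound $\sum_{j\le n}G_j\le CG_n\le CN$, whereas the paper uses $\abs{C_q(N)}\le e_q$ together with $\sum_q e_qG_q=N$.
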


\begin{proof}
Write
\[
  S(M):=\sum_{m<M}g(m),
\]
and let
\[
  N=\sum_{q=0}^{J}e_qG_q,
  \qquad e_J>0,
\]
be the greedy expansion of $N$.  Every $m<N$ has a unique highest
position $q$ at which its greedy expansion differs from that of $N$; the
digits above $q$ agree, while its $q$-th digit is strictly smaller than
$e_q$.  This gives a disjoint decomposition of $[0,N)$ into blocks obtained
by fixing the digits above $q$, choosing the $q$-th digit strictly smaller
than $e_q$, and leaving the lower $q$ digits free.  If $e_{q+1}=a$, then
admissibility gives $e_q<b$, so every choice $c<e_q$ is still compatible with
the digit above; and since $c<a$, there is no restriction on the lower digits.
Proposition~\ref{prop:numeration} therefore gives
\[
  S(N)=\sum_{q=0}^{J}C_q(N)S(G_q),
\]
where
\[
  C_q(N):=
  \left(\prod_{q<r\le J}g(e_rG_r)\right)
  \sum_{0\le c<e_q}g(cG_q).
\]
In particular,
\[
  \abs{C_q(N)}\le e_q,
  \qquad
  N=\sum_{q=0}^{J}e_qG_q.
\]

Put
\[
  \varepsilon_q:=\frac{S(G_q)}{G_q}-L.
\]
Then $\varepsilon_q\to0$, and
\[
  \frac{S(N)}N-L
  =\frac1N\sum_{q=0}^{J}C_q(N)G_q\varepsilon_q
   +\frac{L}{N}\sum_{q=0}^{J}\bigl(C_q(N)-e_q\bigr)G_q.
\]
The first term tends to zero.  Indeed, after fixing $Q$, the contribution of
$q<Q$ is $O_Q(N^{-1})$, while the remaining contribution is bounded by
$\sup_{q\ge Q}\abs{\varepsilon_q}$ because
$\sum_q e_qG_q=N$.

If $L=0$, this already proves the result.  Assume now that
$g(kG_n)\to1$ for $1\le k\le a$.  Since $g(0)=1$ and there are only finitely
many digits, we have
\[
  \delta_n:=\max_{0\le k\le a}\abs{g(kG_n)-1}\longrightarrow0.
\]
It remains to show that
\[
  \frac1N\sum_{q=0}^{J}\bigl(C_q(N)-e_q\bigr)G_q\longrightarrow0.
\]
Fix an integer $h\ge1$.  Since $\abs{C_q(N)-e_q}\le2a$ and
$G_J\le N$, formula~\eqref{eq:G-asymp} gives
\[
  \frac1N\sum_{q\le J-h}
  \abs{C_q(N)-e_q}G_q
  \ll \frac{G_{J-h}}{G_J}
  \ll \alpha^{-h}.
\]
For the remaining $h$ positions, put
\[
  \Delta_{J,h}:=
  \max_{J-h<r\le J}\delta_r.
\]
For fixed $h$, we have $\Delta_{J,h}\to0$ as $J\to\infty$.  Moreover, if
$J-h<q\le J$, the product defining $C_q(N)$ contains at most $h$ factors,
and the elementary telescoping estimate for products of numbers of modulus at
most one yields
\[
  \abs{C_q(N)-e_q}\le a(h+1)\Delta_{J,h}.
\]
Hence, again by \eqref{eq:G-asymp},
\[
  \frac1N\sum_{J-h<q\le J}
  \abs{C_q(N)-e_q}G_q=o_h(1).
\]
We conclude that
\[
  \limsup_{N\to\infty}
  \frac1N\left|\sum_{q=0}^{J}\bigl(C_q(N)-e_q\bigr)G_q\right|
  \ll \alpha^{-h}.
\]
Letting $h\to\infty$ proves the claim.
\end{proof}

We can now complete the sufficiency direction of Theorem~\ref{thm:main}.
Apply Lemma~\ref{lem:all-cutoffs} to
$g_t(n)=\e^{itf(n)}$.  Equation~\eqref{eq:natural-cutoff-limit} supplies the
limit at the natural cutoffs.  Moreover, \eqref{eq:S2} implies
$f(kG_n)\to0$ for every $1\le k\le a$, and hence
\[
  g_t(kG_n)=\e^{itf(kG_n)}\longrightarrow1.
\]
Thus the characteristic functions of the empirical measures on $[0,N)$
converge pointwise to $\Phi$.  Since $\Phi$ is continuous at the origin and
$\Phi(0)=1$, L\'evy's continuity theorem gives a limiting distribution.

\subsection{Necessity}

We now prove the converse implication.  Assume throughout this subsection that
$f$ has a limiting distribution with characteristic function $\Phi$.

At the natural cutoffs this gives
\begin{equation}\label{eq:H-to-Phi}
  \frac{H_n(t)}{G_n}\longrightarrow\Phi(t)
  \qquad(t\in\R).
\end{equation}
Since $\Phi(0)=1$, there exists $t_0>0$ such that
\begin{equation}\label{eq:Phi-nonzero}
  \abs{\Phi(t)}\ge\frac12
  \qquad(\abs t\le t_0).
\end{equation}
All subsequent ratio arguments are restricted to this interval; the matrix
arguments do not require any global non-vanishing statement.

\begin{lemma}\label{lem:infinitesimal}
For every $1\le k\le a$,
\[
  f(kG_n)\longrightarrow0.
\]
\end{lemma}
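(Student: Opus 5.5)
The plan is to compare partial sums of $g_t$ over two consecutive initial intervals whose difference is a single digit block. On such a block $g_t$ factors as $g_t(kG_n)$ times a partial sum over a shorter initial interval. We then use \eqref{eq:Phi-nonzero} to divide by $\Phi(t)$.

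Write $S_t(M)=\sum_{m<M}g_t(m)$. Since $f$ has a limiting distribution with characteristic function $\Phi$, weak convergence gives $S_t(M)=M\Phi(t)+o(M)$ as $M\to\infty$, for each fixed $t$.

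\emph{Case $1\le k<a$.} By Proposition~\ref{prop:numeration}, every $m\in kG_n+[0,G_n)$ has greedy expansion with digit $k$ at position $n$, no higher digits, and arbitrary admissible lower digits. Since $k<a$, there is no restriction from the top digit. Multiplicativity therefore gives
\[
  S_t\bigl((k+1)G_n\bigr)-S_t(kG_n)=g_t(kG_n)H_n(t).
\]
The left side is $G_n\Phi(t)+o(G_n)$. By \eqref{eq:H-to-Phi}, the right side is $g_t(kG_n)\bigl(G_n\Phi(t)+o(G_n)\bigr)$.

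\emph{Case $k=a$.} The block $aG_n+[0,G_n)$ is not entirely admissible, so we use the interval $aG_n+[0,bG_{n-1})$ instead. Its elements have top digit $a$ at position $n$ and digit $<b$ at position $n-1$, so they are admissible. This gives
\[
  S_t\bigl(aG_n+bG_{n-1}\bigr)-S_t(aG_n)=g_t(aG_n)\,S_t(bG_{n-1}).
\]
Both sides equal $bG_{n-1}\Phi(t)+o(G_n)$, up to the factor $g_t(aG_n)$ on the right. Here we use $G_{n-1}\asymp G_n$, which follows from \eqref{eq:G-asymp}.

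In either case, dividing by $G_n$ (respectively $bG_{n-1}$) yields $\bigl(g_t(kG_n)-1\bigr)\Phi(t)\to0$. By \eqref{eq:Phi-nonzero}, this gives $\e^{itf(kG_n)}\to1$ for every $\abs t\le t_0$.

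The last step, which is the only mildly delicate point, converts pointwise convergence on $[0,t_0]$ into $x_n:=f(kG_n)\to0$. We average over $t$. By dominated convergence,
\[
  \frac1{t_0}\int_0^{t_0}\e^{itx_n}\,dt\longrightarrow1 .
\]
For $x\ne0$, the left side equals $(\e^{it_0x}-1)/(it_0x)$. The function $x\mapsto\abs{(\e^{it_0x}-1)/(it_0x)}$ is continuous, is strictly less than $1$ for $x\ne0$, and tends to $0$ as $\abs x\to\infty$. Hence it is bounded by some $c(\varepsilon)<1$ on $\abs x\ge\varepsilon$. This forces $\abs{x_n}<\varepsilon$ eventually, for every $\varepsilon>0$, which proves the lemma.
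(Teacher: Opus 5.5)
Your proof is correct, but it takes a different route from the paper's.

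You use the full hypothesis, namely $S_t(M)/M\to\Phi(t)$ along \emph{all} $M$, at the intermediate cutoffs $kG_n$, $(k+1)G_n$ and $aG_n+bG_{n-1}$. In this way a single admissible block isolates one digit value and gives $g_t(kG_n)\to1$ directly. Your block identities are right, including the observation that $m'<bG_{n-1}$ forces $e_{n-1}(m')<b$ in the maximal-digit case.

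The paper uses only the natural cutoffs \eqref{eq:H-to-Phi}. Substituting $H_n/G_n\to\Phi$ into \eqref{eq:H-rec} and dividing by $\Phi(t)\ne0$ gives $u_{n+1}(t)\to0$. Since $\operatorname{Re}u_n(t)$ is a sum of non-positive terms of the form $\cos(\cdot)-1$, each of them tends to zero. The maximal digit is read off from the $k=0$ term $\cos(tf(aG_n))$ of the second sum. The concluding average over $t$ is essentially the same in both proofs.

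Your argument is more elementary and treats each digit separately, without relying on the sign structure of $\operatorname{Re}u_n$. The paper's argument needs less input. Combined with Lemma~\ref{lem:all-cutoffs}, it shows that convergence along $G_n$ alone, to a limit continuous at the origin, already yields a limiting distribution. It also introduces the Perron drift $u_n$, which is reused in the proof of \eqref{eq:S1}.
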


\begin{proof}
Fix $|t|\le t_0$ and write $\phi_n(t)=H_n(t)/G_n$.  With
\[
  \tau_n(t)=g_t(aG_{n+1})\sigma_n(b;t),
\]
recurrence \eqref{eq:H-rec} gives
\begin{align*}
  \phi_{n+2}-\Phi={}&
  \sigma_{n+1}(a;t)\frac{G_{n+1}}{G_{n+2}}
     (\phi_{n+1}-\Phi)\\
  &+\tau_n(t)\frac{G_n}{G_{n+2}}(\phi_n-\Phi)\\
  &+\Phi\left(
    \sigma_{n+1}(a;t)\frac{G_{n+1}}{G_{n+2}}
    +\tau_n(t)\frac{G_n}{G_{n+2}}-1\right).
\end{align*}
The first two terms tend to zero because the digit sums are uniformly bounded.
Using $G_{n+1}/G_{n+2}\to\alpha^{-1}$ and
$G_n/G_{n+2}\to\alpha^{-2}$, and recalling that $\Phi(t)\ne0$, we obtain
\[
  \alpha\sigma_{n+1}(a;t)+\tau_n(t)-\alpha^2\longrightarrow0.
\]
This is precisely $u_{n+1}(t)\to0$.  Its real part is
\begin{align*}
  \operatorname{Re}u_n(t)
  ={}&-\alpha\sum_{0\le k<a}
       \bigl(1-\cos(tf(kG_n))\bigr)\\
     &-\sum_{0\le k<b}
       \bigl(1-\cos(t(f(aG_n)+f(kG_{n-1})))\bigr).
\end{align*}
Every summand is non-positive.  Therefore, for every $\abs t\le t_0$,
\[
  \cos(tf(kG_n))\to1\quad(1\le k<a),
  \qquad
  \cos(tf(aG_n))\to1,
\]
the second assertion following from the term $k=0$ in the second sum.

It remains to remove the possible multiples of $2\pi$.  We use the elementary
implication
\[
  \cos(tx_n)\longrightarrow1\quad\text{for every $t$ in a non-trivial interval}
  \quad\Longrightarrow\quad x_n\longrightarrow0.
\]
Indeed, by dominated convergence,
\[
  \int_{-t_0}^{t_0}\bigl(1-\cos(tx_n)\bigr)\,dt
  =2t_0-\frac{2\sin(t_0x_n)}{x_n}\longrightarrow0,
\]
with the quotient interpreted by continuity at $x_n=0$.  For every
$\varepsilon>0$, the expression on the right is bounded below by a positive
constant whenever $|x_n|\ge\varepsilon$: it is continuous and strictly
positive away from zero, and tends to $2t_0$ as $|x_n|\to\infty$.  Hence
$x_n\to0$.  Applying this to each digit-value sequence proves the lemma.
\end{proof}

We next prove the quadratic condition \eqref{eq:S2}.  The two contractions have complementary roles.  In the Zeckendorf case, the
necessity proof in \cite[proof of Theorem~7]{DrmotaVerwee} already uses a
spectral-norm deficit after grouping two consecutive transfer matrices.  Here
a single matrix contains the phases of the digits $1,\ldots,a-1$ in its first
entry, while the maximal digit $a$ occurs in an entry whose modulus is
unchanged and cannot produce a one-step norm deficit.  In a product of two
consecutive matrices, that phase interferes with an untwisted term and becomes
detectable.  The next two lemmas are the order-two estimates needed to separate
these two effects.

Put
\[
  W=\begin{pmatrix}\sqrt b&0\\0&1\end{pmatrix},
  \qquad
  B_n(t)=W^{-1}A_n(t)W,
  \qquad
  C=W^{-1}AW=
  \begin{pmatrix}a&\sqrt b\\\sqrt b&0\end{pmatrix}.
\]
The matrix $C$ is symmetric and $\norm C_2=\alpha$.

The elementary phase-cancellation mechanism in the next estimate is the same
as in the $q$-adic bound used in the proof of the classical digital
Erd\H{o}s--Wintner theorem; compare \cite[Eq.~(2.6)]{DrmotaVerwee}.  What is
specific here is the weighted Perron norm dictated by the companion matrix.

\begin{lemma}\label{lem:one-step}
There exists $c_1>0$, depending only on $a,b$, such that
\[
  \norm{B_n(t)}_2
  \le\alpha\exp\!\left(
    -c_1\sum_{1\le k<a}
       \dist{\frac{tf(kG_n)}{2\pi}}^2
  \right).
\]
\end{lemma}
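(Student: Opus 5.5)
The plan is to bound $\norm{B_n(t)}_2$ by the spectral norm of a non-negative symmetric matrix of the same shape as $C$, whose upper-left entry is reduced by the phase cancellation in $\sigma_n(a;t)$. The comparison reduces everything to a scalar monotonicity estimate for the Perron root of that matrix.

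\emph{Step 1: domination by a non-negative matrix.} Conjugating by $W$ gives
\[
  B_n(t)=\begin{pmatrix}\sigma_n(a;t)&b^{-1/2}g_t(aG_n)\sigma_{n-1}(b;t)\\ \sqrt b&0\end{pmatrix}.
\]
Here $\abs{\sigma_n(a;t)}=:s_n\le a$ and $\abs{b^{-1/2}g_t(aG_n)\sigma_{n-1}(b;t)}\le\sqrt b$. For any complex matrix $M$ with entrywise modulus $\abs M$, we have $\norm{Mx}_2\le\norm{\abs M\,\abs x}_2$, so $\norm{M}_2\le\norm{\abs M}_2$. The spectral norm is also monotone in the entries of non-negative matrices. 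Hence
\[
  \norm{B_n(t)}_2\le\psi(s_n),\qquad
  \psi(s):=\Bigl\|\begin{pmatrix}s&\sqrt b\\\sqrt b&0\end{pmatrix}\Bigr\|_2=\frac{s+\sqrt{s^2+4b}}2,
\]
using that this matrix is symmetric with Perron root $\psi(s)$. Note that $\psi(a)=\alpha$, consistent with $\norm C_2=\alpha$.

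\emph{Step 2: scalar Lipschitz loss.} For $s\in[0,a]$ we have $\psi'(s)=\tfrac12\bigl(1+s/\sqrt{s^2+4b}\bigr)\ge\tfrac12$. Writing $\delta_n:=a-s_n\in[0,a]$, this gives $\psi(s_n)\le\alpha-\delta_n/2\le\alpha\exp(-\delta_n/(2\alpha))$. It therefore suffices to show
\[
  \delta_n\ge c\sum_{1\le k<a}\dist{\tfrac{tf(kG_n)}{2\pi}}^2
\]
with $c$ depending only on $a$.

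\emph{Step 3: phase cancellation.} This is the only real content, though it is elementary. If $a=1$ the sum is empty and there is nothing to prove, so assume $a\ge2$. Put $\theta_k=tf(kG_n)$, so that $\theta_0=0$. For any fixed $1\le k<a$, the triangle inequality applied after pairing the $k=0$ term with the $k$-th term gives
\[
  s_n\le\abs{1+\e^{i\theta_k}}+(a-2)=2\abs{\cos(\theta_k/2)}+a-2.
\]
With $x=\dist{\theta_k/2\pi}\in[0,\tfrac12]$ we have $\abs{\cos(\theta_k/2)}=\cos(\pi x)\le1-4x^2$, using concavity of $\cos(\pi x)$ on $[0,\tfrac12]$ against the chord. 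Hence $\delta_n\ge8\max_k\dist{\theta_k/2\pi}^2\ge\frac{8}{a-1}\sum_{1\le k<a}\dist{\theta_k/2\pi}^2$. Combining with Step 2 gives the lemma with $c_1=4/((a-1)\alpha)$.

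The main point to get right is Step 1. The maximal digit and the $b$-block sit in the off-diagonal entry and carry no modulus loss, so the weighted conjugation by $W$ is exactly what makes the dominating matrix symmetric with top eigenvalue $\alpha$. An unweighted norm would lose a constant factor and destroy the $\exp(-c\cdot)$ form at the scale $\alpha$.
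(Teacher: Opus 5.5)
Your proof is correct and follows the paper's route: you dominate $B_n(t)$ entrywise by the symmetric non-negative matrix with Perron root $\frac{x+\sqrt{x^2+4b}}{2}$ at $x=\abs{\sigma_n(a;t)}$, bound its derivative from below, and use phase cancellation against the zero digit. The only variation is that you pair $\theta_0=0$ with each $\theta_k$ by the triangle inequality and pass from the maximum to the sum, whereas the paper uses $a^2-\abs{\sigma_n(a;t)}^2\ge2\sum_{1\le k<a}(1-\cos\theta_k)$.

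One justification needs fixing. Concavity of $\cos(\pi x)$ against its chord gives $\cos(\pi x)\ge1-2x$, which is the wrong direction. Your bound $\cos(\pi x)\le1-4x^2$ on $[0,\tfrac12]$ is nevertheless true: via $1-\cos(\pi x)=2\sin^2(\pi x/2)$ it is equivalent to $\sin u\ge\frac{2\sqrt2}{\pi}u$ on $[0,\pi/4]$, which is concavity of $\sin$. Alternatively, Jordan's inequality gives $\cos(\pi x)\le1-2x^2$, which suffices with a smaller $c_1$.
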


\begin{proof}
For every complex matrix $M$, the componentwise inequality
$|Mx|\le |M|\,|x|$ gives $\|M\|_2\le\||M|\|_2$.  Here, entrywise,
\[
  |B_n(t)|
  \le
  \begin{pmatrix}|\sigma_n(a;t)|&\sqrt b\\\sqrt b&0\end{pmatrix}.
\]
The norm of the non-negative symmetric matrix on the right is
\[
  h(x)=\frac{x+\sqrt{x^2+4b}}2
  \quad\text{at }x=\abs{\sigma_n(a;t)}.
\]
Since $h(a)=\alpha$ and $h'$ is bounded below by a positive constant on
$[0,a]$, one has
\[
  \alpha-h(x)\gg_{a,b}a-x
  \geqslant \frac{a^2-x^2}{2a}.
\]
Moreover, using the elementary bound
$1-\cos x\ge 8\dist{x/(2\pi)}^2$ for every $x\in\mathbb R$, we obtain
\[
  a^2-\abs{\sigma_n(a;t)}^2
  \ge2\sum_{1\le k<a}(1-\cos(tf(kG_n)))
  \ge16\sum_{1\le k<a}
      \dist{\frac{tf(kG_n)}{2\pi}}^2.
\]
Thus $\norm{B_n(t)}_2\le\alpha-c\mathcal E_n(t)$, where
\[
  \mathcal E_n(t)=\sum_{1\le k<a}\dist{\frac{tf(kG_n)}{2\pi}}^2.
\]
After decreasing $c$ so that $0\le c\mathcal E_n(t)/\alpha\le1$, the inequality
$1-y\le\e^{-y}$ proves the claimed estimate.
\end{proof}

A single step need not detect the contribution of the maximal digit, so we now
group two consecutive matrices.

\begin{lemma}\label{lem:two-step}
There exists $c_2>0$, depending only on $a,b$, such that
\[
  \norm{B_{n+1}(t)B_n(t)}_2
  \le\alpha^2\exp\!\left(
    -c_2\dist{\frac{tf(aG_{n+1})}{2\pi}}^2
  \right).
\]
\end{lemma}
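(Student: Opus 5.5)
The plan is to repeat the entrywise-domination argument of Lemma~\ref{lem:one-step}, this time for the product $M_n(t):=B_{n+1}(t)B_n(t)$. The deficit now appears in the $(1,1)$ entry, where the maximal-digit phase $g_t(aG_{n+1})$ meets an untwisted contribution. Write $\omega_n(t)=g_t(aG_n)\sigma_{n-1}(b;t)$, so that
\[
  B_n(t)=\begin{pmatrix}\sigma_n(a;t)&\omega_n(t)/\sqrt b\\ \sqrt b&0\end{pmatrix}.
\]
Multiplying out gives
\[
  M_n(t)=\begin{pmatrix}
    \sigma_{n+1}(a)\sigma_n(a)+g_t(aG_{n+1})\sigma_n(b) & \sigma_{n+1}(a)\omega_n/\sqrt b\\
    \sqrt b\,\sigma_n(a) & \omega_n
  \end{pmatrix}.
\]
At $t=0$ this is $C^2=\begin{pmatrix}a^2+b&a\sqrt b\\a\sqrt b&b\end{pmatrix}$, with $\norm{C^2}_2=\alpha^2$.

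\textbf{Entrywise bound.} The three off-diagonal and lower entries are bounded in modulus by the corresponding entries $a\sqrt b$, $a\sqrt b$, $b$ of $C^2$. For the $(1,1)$ entry I will isolate two terms: the term $1\cdot1$ coming from the digit $0$ in both factors of $\sigma_{n+1}(a)\sigma_n(a)$, and the term $g_t(aG_{n+1})\cdot1$ coming from $k=0$ in $\sigma_n(b)$. Bounding every other term by $1$ gives
\[
  \abs{(M_n)_{11}}\le a^2+b-2+\abs{1+\e^{i\theta}},
  \qquad \theta=tf(aG_{n+1}).
\]
Since $2-\abs{1+\e^{i\theta}}=2-2\abs{\cos(\theta/2)}\ge\sin^2(\theta/2)\ge4\dist{\theta/(2\pi)}^2$, this yields $\abs{M_n}\le C^2-\delta E_{11}$ entrywise. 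Here $E_{11}$ is the first matrix unit and $\delta=4\dist{\theta/(2\pi)}^2\in[0,1]$. As in Lemma~\ref{lem:one-step}, $\norm{M_n}_2\le\norm{\abs{M_n}}_2\le\norm{C^2-\delta E_{11}}_2$. The last step uses monotonicity of the spectral norm under entrywise domination for non-negative matrices. The matrix $C^2-\delta E_{11}$ stays non-negative because $a^2+b\ge2>\delta$.

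\textbf{Quantitative deficit.} This is the only real point. Set $\rho(\delta)=\norm{C^2-\delta E_{11}}_2$, which is the Perron root of a non-negative symmetric matrix. I will use $\rho(\delta)=\max_{\norm x=1}x^{\mathsf T}(C^2-\delta E_{11})x$. As a maximum of affine functions of $\delta$, it is convex on $[0,1]$. The matrix is irreducible, since its off-diagonal entries $a\sqrt b$ are positive, and it decreases strictly in one entry. Perron--Frobenius therefore gives $\rho(1)<\rho(0)=\alpha^2$. Alternatively, the Perron eigenvector of $C^2$ has non-zero first coordinate, so $\rho'(0)<0$. Convexity then gives the chord bound
\[
  \rho(\delta)\le\alpha^2-\delta\bigl(\alpha^2-\rho(1)\bigr)
  \qquad(0\le\delta\le1),
\]
that is, $\rho(\delta)\le\alpha^2-c\,\delta$ with $c>0$ depending only on $a,b$. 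Finally I will write $\alpha^2(1-c\delta/\alpha^2)\le\alpha^2\exp(-c\delta/\alpha^2)$, exactly as at the end of Lemma~\ref{lem:one-step}. This gives the claim with $c_2=4c/\alpha^2$.

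The main obstacle is making sure the deficit in one entry survives as a deficit in the norm with a constant independent of $n$ and $t$. The convexity and chord argument handles this uniformly. The only other thing to check is that both isolated terms (the product of the two zero-digit terms, and the $k=0$ term of $\sigma_n(b)$) really are present for all $a,b\ge1$, which holds because $b\ge1$.
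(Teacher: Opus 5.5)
Your proposal is correct and follows essentially the paper's argument: you dominate $\abs{B_{n+1}B_n}$ entrywise by $C^2-\delta E_{11}$, using the zero-phase term and the $k=0$ maximal-digit term in the upper-left entry, then apply monotonicity of the Euclidean norm and a linear deficit in the Perron root. The one difference is cosmetic: you obtain the linear deficit from convexity of $\delta\mapsto\rho(\delta)$ together with the chord through $\rho(1)<\rho(0)$, whereas the paper differentiates the explicit eigenvalue formula $\Lambda(\delta)$ and bounds $-\Lambda'$ from below on $[0,m]$.
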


\begin{proof}
The upper-left entry of $B_{n+1}B_n$ is
\[
  S_n(t)=\sigma_{n+1}(a;t)\sigma_n(a;t)
    +g_t(aG_{n+1})\sigma_n(b;t).
\]
This is a sum of $m=a^2+b$ complex numbers of modulus one.  Among them are a
term of phase $0$ and a term of phase $tf(aG_{n+1})$.  Hence
\[
  m^2-\abs{S_n(t)}^2
  \ge2\bigl(1-\cos(tf(aG_{n+1}))\bigr),
\]
and therefore
\begin{equation}\label{eq:delta}
  \delta_n:=m-\abs{S_n(t)}
  \gg_{a,b}\dist{\frac{tf(aG_{n+1})}{2\pi}}^2.
\end{equation}
The remaining entries of $\abs{B_{n+1}B_n}$ are bounded by the corresponding
entries of
\[
  C^2=\begin{pmatrix}m&a\sqrt b\\a\sqrt b&b\end{pmatrix}.
\]
Thus, entrywise,
\[
  \abs{B_{n+1}B_n}
  \le C^2-\delta_n
          \begin{pmatrix}1&0\\0&0\end{pmatrix}.
\]
The matrix on the right is symmetric and entrywise non-negative.  Monotonicity
of the Euclidean operator norm under entrywise domination by a non-negative
matrix therefore gives
\[
  \norm{B_{n+1}B_n}_2
  \le\rho\!\left(C^2-\delta_n
          \begin{pmatrix}1&0\\0&0\end{pmatrix}\right).
\]
By symmetry and the Perron--Frobenius theorem, this spectral radius is also
the Euclidean operator norm.  Writing
\[
  \Lambda(\delta)=\frac{m-\delta+b+
  \sqrt{(m-\delta-b)^2+4a^2b}}2,
\]
we have $\Lambda(0)=\alpha^2$ and
\[
  -\Lambda'(\delta)
  =\frac12\left(1+\frac{m-\delta-b}
   {\sqrt{(m-\delta-b)^2+4a^2b}}\right)>0.
\]
The last expression has a positive minimum on $[0,m]$.  Hence
$\Lambda(\delta)\le\alpha^2-c\delta$ there.  Combining this with \eqref{eq:delta} gives
\[
  \norm{B_{n+1}(t)B_n(t)}_2
  \le \alpha^2-c\dist{\frac{tf(aG_{n+1})}{2\pi}}^2.
\]
After decreasing $c$ so that the normalized deficit is at most $1$, the
inequality $1-y\le\e^{-y}$ proves the claimed estimate.
\end{proof}

We can now combine the preceding contraction estimates to recover
\eqref{eq:S2}.

\begin{proposition}\label{prop:S2-necessity}
If $f$ has a limiting distribution, then the quadratic series
\eqref{eq:S2} converges.
\end{proposition}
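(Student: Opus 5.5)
The plan is to compare the size of $H_n(t)$ forced by \eqref{eq:H-to-Phi} with the upper bound coming from the contraction Lemmas~\ref{lem:one-step} and~\ref{lem:two-step}. For $\abs t\le t_0$, \eqref{eq:Phi-nonzero} and $G_n\sim\kappa\alpha^n$ give $\abs{H_n(t)}\gg\alpha^n$ for all large $n$. On the other hand, \eqref{eq:matrix-rec} gives
\[
  \binom{H_{n+1}}{H_n}=W B_n\cdots B_1W^{-1}\binom{H_1}{H_0},
\]
so $\abs{H_{n+1}(t)}\ll\norm{B_n(t)\cdots B_1(t)}_2$. By submultiplicativity, this norm is bounded by a product of one-step and two-step norms.

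To capture both deficits, I would split the product $B_n\cdots B_1$ in two ways. First, bound it by the product of the single factors; Lemma~\ref{lem:one-step} then gives
\[
  \norm{B_n\cdots B_1}_2\le\alpha^n\exp\Bigl(-c_1\sum_{j\le n}\sum_{1\le k<a}\dist{tf(kG_j)/(2\pi)}^2\Bigr).
\]
Second, pair consecutive factors in two ways, using pairs $(B_{2i}B_{2i-1})$ and pairs $(B_{2i+1}B_{2i})$, with one leftover single factor bounded by $\alpha$. Lemma~\ref{lem:two-step} controls the maximal-digit terms at the odd and even indices respectively. Multiplying the two resulting bounds, or simply adding the exponents, gives
\[
  \abs{H_{n+1}(t)}\ll\alpha^{n}\exp\Bigl(-c\sum_{j\le n+1}\sum_{1\le k\le a}\dist{tf(kG_j)/(2\pi)}^2\Bigr).
\]
Comparing with the lower bound $\gg\alpha^n$ shows that the partial sums of $\sum_j\sum_{1\le k\le a}\dist{tf(kG_j)/(2\pi)}^2$ are bounded uniformly in $n$, for each fixed $\abs t\le t_0$.

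The main obstacle is removing the distance to $\Z$. By Lemma~\ref{lem:infinitesimal}, $f(kG_j)\to0$, so for $j\ge j_0$ and $\abs t\le t_0$ we have $\abs{tf(kG_j)/(2\pi)}\le1/2$. The distance then equals $\abs{t f(kG_j)}/(2\pi)$. Fixing one value, say $t=t_0$, gives
\[
  t_0^2\sum_{j\ge j_0}\sum_k f(kG_j)^2<\infty.
\]
The finitely many terms with $j<j_0$ are irrelevant, so \eqref{eq:S2} converges.

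One point needs care: the implied constant in $\abs{H_{n+1}}\ll\norm{\cdots}$ must be independent of $n$. It is, since it only involves $W$, $W^{-1}$ and the bounded initial vector $(H_1,H_0)$. The lower bound on $\abs{H_n}$ holds for $n\ge n_0(t)$, which suffices because $t$ is fixed.
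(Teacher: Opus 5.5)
Your proposal is correct and follows the paper's proof essentially step for step: the lower bound $|H_n(t)|\gg\alpha^n$ from $\Phi(t)\ne0$, the $W$-conjugated product bound, Lemma~\ref{lem:one-step} for the non-maximal digits, and the two shifted pairings with Lemma~\ref{lem:two-step} for the maximal digit. The differences are cosmetic. The paper fixes one small $t$ with $|tf(kG_n)|\le\pi$ for all $n$, whereas you take $t=t_0$ and pass to a tail $j\ge j_0$. The paper also keeps the three bounds separate rather than combining them. Finally, your range $j\le n+1$ should read $j\le n$, since $H_{n+1}$ only involves levels up to $n$; the first one or two levels are not covered, which is harmless.
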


\begin{proof}
By Lemma~\ref{lem:infinitesimal}, the finitely many sequences
$(f(kG_n))_n$, $1\le k\le a$, are bounded.  Choose once and for all a non-zero
$t$ with $|t|\le t_0$ so small that
\begin{equation}\label{eq:small-fixed-t}
  |tf(kG_n)|\le\pi
  \qquad(n\ge0,\ 1\le k\le a).
\end{equation}

Conjugating the matrix product in \eqref{eq:matrix-rec} by $W$ changes its
norm only by a fixed factor.  Since $|H_1(t)|\le a+1$, there is a constant
$C_0>0$, independent of $N$, such that
\[
  |H_N(t)|
  \le C_0\,\|B_{N-1}(t)\cdots B_1(t)\|_2.
\]
Lemma~\ref{lem:one-step} gives
\[
  |H_N(t)|
  \le C_0\alpha^{N-1}
  \exp\!\left(-c_1\sum_{1\le n<N}
    \sum_{1\le k<a}
       \dist{\frac{tf(kG_n)}{2\pi}}^2\right).
\]
On the other hand,
$H_N(t)/\alpha^N\to\kappa\Phi(t)$ and $|\Phi(t)|\ge1/2$.  The partial sums in
the exponential are therefore bounded.  By \eqref{eq:small-fixed-t},
\[
  \sum_{n\ge0}\sum_{1\le k<a}f(kG_n)^2<\infty.
\]

To detect the maximal digit, apply Lemma~\ref{lem:two-step} after grouping the product first into the disjoint pairs
\[
  B_{2j+1}(t)B_{2j}(t)\qquad(j\ge1),
\]
and then, in a separate estimate, into the shifted pairs
\[
  B_{2j+2}(t)B_{2j+1}(t)\qquad(j\ge0).
\]
At most two boundary matrices remain in either grouping, and each has norm at
most $\alpha$ by Lemma~\ref{lem:one-step}.  Thus, up to a constant independent
of $N$,
\[
  \norm{B_{N-1}\cdots B_1}_2
  \le \alpha^{N-1}\exp\left(
  -c_2\sum_{\substack{3\le m<N\\ m\ \mathrm{odd}}}
       \dist{\frac{tf(aG_m)}{2\pi}}^2\right)
\]
and, from the shifted grouping,
\[
  \norm{B_{N-1}\cdots B_1}_2
  \le \alpha^{N-1}\exp\left(
  -c_2\sum_{\substack{2\le m<N\\ m\ \mathrm{even}}}
       \dist{\frac{tf(aG_m)}{2\pi}}^2\right).
\]
The same lower bound for $|H_N(t)|/\alpha^N$ shows that both displayed sums
remain bounded as $N\to\infty$.  The finitely many omitted initial levels are
harmless.  Using \eqref{eq:small-fixed-t} once more gives
\[
  \sum_{n\ge0}f(aG_n)^2<\infty.
\]
Together with the non-maximal digit estimate, this is \eqref{eq:S2}.
\end{proof}

It remains to prove the drift condition \eqref{eq:S1}.  At this stage the quadratic series is already known.  It controls every
quadratic remainder in the ratio recurrence.  We may therefore isolate the
linear motion in the Perron direction.  This is the only point at which ratios
are used, and the argument remains inside the interval where $\Phi$ is bounded
away from zero.

Fix from now on a non-zero $t$ with $\abs t\le t_0$.  By
\eqref{eq:H-to-Phi} and \eqref{eq:Phi-nonzero}, there exists $N_0\ge1$ such
that $H_n(t)\ne0$ for every $n\ge N_0$.  Thus
$r_n(t)=H_n(t)/H_{n-1}(t)$ is defined for every $n>N_0$, and
\[
  \frac{r_n(t)}{G_n/G_{n-1}}
  =\frac{H_n(t)/G_n}{H_{n-1}(t)/G_{n-1}}
  \longrightarrow1.
\]
Since $G_n/G_{n-1}\to\alpha$, we obtain
\[
  r_n(t)\longrightarrow\alpha.
\]
For $n>N_0$, put $\varepsilon_n(t)=r_n(t)-\alpha$.

The next argument is the order-two analogue of the ratio perturbation used in
the Zeckendorf proof; compare \cite[Lemma~6]{DrmotaVerwee}.  Because both
$p_n$ and the Perron drift $u_n$ occur here, we record the needed estimate in
our present notation.

\begin{lemma}\label{lem:epsilon}
For this fixed $t$,
\[
  \sum_{n>N_0}\abs{\varepsilon_n(t)}^2<\infty,
  \qquad
  \sum_{n>N_0}\varepsilon_n(t)\quad\text{converges}.
\]
\end{lemma}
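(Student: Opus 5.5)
The plan is to derive a scalar recurrence for the ratios from \eqref{eq:H-rec}, linearize it around the fixed point $\alpha$, and read off both assertions from the linearization, with \eqref{eq:S2} (Proposition~\ref{prop:S2-necessity}) controlling the quadratic terms. Dividing \eqref{eq:H-rec} (with $n$ replaced by $n-1$) by $H_n$ gives, for $n>N_0+1$,
\[
  r_{n+1}=\sigma_n(a;t)+\frac{\tau_{n-1}(t)}{r_n}
  =a+p_n+\frac{b+q_n}{\alpha+\varepsilon_n}.
\]
Expanding $1/(\alpha+\varepsilon_n)=\alpha^{-1}-\alpha^{-2}\varepsilon_n+O(\varepsilon_n^2)$, which is legitimate since $\varepsilon_n\to0$, and using $a+b/\alpha=\alpha$ and $b/\alpha^2=\abs{\lambda}/\alpha$, i.e.\ $-b/\alpha^2=\lambda/\alpha$, the recurrence becomes
\[
  \varepsilon_{n+1}=\frac{\lambda}{\alpha}\,\varepsilon_n+p_n+\frac{q_n}{\alpha}
  +O\bigl(\abs{\varepsilon_n}^2+\abs{q_n}\abs{\varepsilon_n}\bigr)
  =\theta\varepsilon_n+\frac{u_n}{\alpha}+\eta_n,
  \qquad \theta:=\frac{\lambda}{\alpha},\ \abs\theta<1.
\]

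For the square summability, I note that by Lemma~\ref{lem:taylor-u} (first assertion, now valid by Proposition~\ref{prop:S2-necessity}), $\sum\abs{p_n}^2+\abs{q_n}^2<\infty$, hence $\sum\abs{u_n}^2<\infty$. Since $\varepsilon_n\to0$, for $n$ beyond some $N_1$ the remainder satisfies $\abs{\eta_n}\le\frac{1-\abs\theta}{2}\abs{\varepsilon_n}$. Then $\abs{\varepsilon_{n+1}}\le\theta'\abs{\varepsilon_n}+\abs{u_n}/\alpha$ with $\theta'=(1+\abs\theta)/2<1$. Writing $\varepsilon_n$ as a convolution of $(u_k/\alpha)$ with the geometric kernel $\theta'^{\,j}$ plus a decaying initial term, Young's inequality ($\ell^1*\ell^2\subset\ell^2$) gives $\sum\abs{\varepsilon_n}^2<\infty$.

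For the convergence of $\sum\varepsilon_n$, the crucial input is that $\sum_n u_n(t)$ converges. I would obtain this from the relation $\prod_{N_0<n\le N} r_n=H_N/H_{N_0}$. Since $H_N/\alpha^N\to\kappa\Phi(t)\ne0$, the normalized products $\prod_{N_0<n\le N}(1+\varepsilon_n/\alpha)$ converge to a nonzero limit. Taking logarithms, $\log(1+\varepsilon_n/\alpha)=\varepsilon_n/\alpha+O(\abs{\varepsilon_n}^2)$, and the quadratic part is summable by the first step. This directly gives that $\sum\varepsilon_n$ converges modulo $2\pi i$ ambiguities, which are removed by choosing the principal branch, legitimate because $\varepsilon_n\to0$. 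This argument proves the second assertion without needing $u_n$ at all, and it is the route I would take; the linearized recurrence is then used afterwards, in the paper's next step, to transfer convergence from $\sum\varepsilon_n$ to $\sum u_n$ and hence, via \eqref{eq:u-expansion}, to \eqref{eq:S1}. The main obstacle I expect is the branch/logarithm bookkeeping: making sure the continuous logarithm of the partial products matches the sum of principal logarithms. This is handled by noting that the partial products converge to a nonzero limit and that consecutive factors tend to $1$, so the arguments of the partial products converge and no winding occurs eventually.
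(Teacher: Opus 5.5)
Your proposal is correct and follows the paper's proof essentially step by step: the ratio recurrence linearized at $\alpha$ with contraction factor $b/\alpha^2<1$, the geometric-kernel convolution bound giving $\ell^2$, and the logarithm of the convergent normalized product $\prod(1+\varepsilon_n/\alpha)$ with the same no-winding argument. The only differences are minor: the paper uses the exact identity \eqref{eq:epsilon-identity} where you use a Taylor remainder $\eta_n$ (which also needs $q_n\to0$, available from $q\in\ell^2$), and your sentence calling convergence of $\sum_n u_n(t)$ the ``crucial input'' is backwards---as you then observe yourself, that convergence is a consequence drawn in the next proposition, not an ingredient here.
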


\begin{proof}
For $n>N_0$, the ratio recurrence obtained from \eqref{eq:H-rec} is
\[
  r_{n+1}=\sigma_n(a;t)
    +\frac{g_t(aG_{n})\sigma_{n-1}(b;t)}{r_n}.
\]
Using $\alpha^2=a\alpha+b$, this is equivalent to
\begin{equation}\label{eq:epsilon-identity}
  \alpha\varepsilon_{n+1}+(\alpha-a)\varepsilon_n
  =u_n+p_n\varepsilon_n-\varepsilon_n\varepsilon_{n+1}.
\end{equation}
By Proposition~\ref{prop:S2-necessity} and the estimates used in
Lemma~\ref{lem:taylor-u}, the sequences $(u_n)$ and $(p_n)$ belong to
$\ell^2$.
Solving the ratio recurrence for $\varepsilon_{n+1}$ gives
\[
  \varepsilon_{n+1}
  =\frac{u_n-(\alpha-a-p_n)\varepsilon_n}{\alpha+\varepsilon_n}.
\]
Since $p_n\to0$ and $\varepsilon_n\to0$, the coefficient of
$\varepsilon_n$ tends in modulus to
\[
  \frac{\alpha-a}{\alpha}=\frac{b}{\alpha^2}<1.
\]
Thus, for all sufficiently large $n$,
\[
  |\varepsilon_{n+1}|
  \le L|\varepsilon_n|+C|u_n|
\]
with $L<1$.  Iteration and the $\ell^2$ boundedness of convolution by a
geometric kernel yield $(\varepsilon_n)_{n>N_0}\in\ell^2$.

On the other hand,
\[
  \frac{H_n(t)}{\alpha^n}\longrightarrow\kappa\Phi(t)\ne0.
\]
Moreover, for $n>N_0$,
\[
  \prod_{j=N_0+1}^{n}\left(1+\frac{\varepsilon_j(t)}\alpha\right)
  =\frac{\alpha^{N_0}H_n(t)}{H_{N_0}(t)\alpha^n}
  \longrightarrow
  \frac{\alpha^{N_0}\kappa\Phi(t)}{H_{N_0}(t)}\ne0.
\]
Since $\varepsilon_n(t)\to0$ and
$\sum_{n>N_0}|\varepsilon_n(t)|^2<\infty$, choose a tail on which
$|\varepsilon_n(t)/\alpha|<1/2$.  The principal logarithm is then defined for
each factor and
\[
  \log(1+z)=z+O(|z|^2).
\]
The partial products on this tail converge to a non-zero limit, so, after
possibly discarding finitely many further factors, they remain in a simply
connected neighbourhood $U$ of that limit which avoids the origin.  If $L$ is
a branch of the logarithm on $U$, then the partial sum of the principal
logarithms differs from $L$ of the corresponding partial product by an integer
multiple of $2\pi i$.  Since the factors tend to $1$ and the partial products
converge, this integer is eventually constant.  Hence the logarithmic partial
sums converge.  The quadratic remainders are absolutely summable, and therefore
$\sum_{n>N_0}\varepsilon_n(t)$ converges.
\end{proof}

\begin{proposition}
If $f$ has a limiting distribution, then the drift series
\eqref{eq:S1} converges.
\end{proposition}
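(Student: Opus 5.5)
The plan is to combine identity \eqref{eq:epsilon-identity} with Lemma~\ref{lem:epsilon} and the expansion \eqref{eq:u-expansion}. We fix the same non-zero $t$ with $\abs t\le t_0$ used there. Summing \eqref{eq:epsilon-identity} over $N_0<n\le N$ gives
\[
  \sum_{N_0<n\le N}u_n
  =\alpha\sum_{N_0<n\le N}\varepsilon_{n+1}+(\alpha-a)\sum_{N_0<n\le N}\varepsilon_n
   -\sum_{N_0<n\le N}p_n\varepsilon_n+\sum_{N_0<n\le N}\varepsilon_n\varepsilon_{n+1}.
\]
By Lemma~\ref{lem:epsilon}, the first two sums on the right converge as $N\to\infty$. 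The last two converge absolutely by Cauchy--Schwarz, since $(\varepsilon_n)\in\ell^2$ by Lemma~\ref{lem:epsilon} and $(p_n)\in\ell^2$ by Proposition~\ref{prop:S2-necessity} and the estimates of Lemma~\ref{lem:taylor-u}. Hence $\sum_n u_n(t)$ converges for this fixed $t$.

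Next we pass from $u_n$ to $D_n$. Proposition~\ref{prop:S2-necessity} gives \eqref{eq:S2}, so the hypothesis of Lemma~\ref{lem:taylor-u} holds. Its expansion \eqref{eq:u-expansion}, applied with $T=\abs t$, gives
\[
  u_n(t)=itD_n+R_n(t),
  \qquad
  \sum_n\abs{R_n(t)}\ll_{a,b}t^2\sum_{n\ge0}\sum_{1\le k\le a}f(kG_n)^2<\infty.
\]
Therefore $it\sum_n D_n$ converges. Since $t\ne0$, the series \eqref{eq:S1} converges, up to finitely many initial terms $n\le N_0$, which do not affect convergence.

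The main subtlety is not in this final step but in its inputs. These are the convergence, not merely boundedness, of $\sum\varepsilon_n$ in Lemma~\ref{lem:epsilon}, which uses $\Phi(t)\ne0$, and the fact that the remainder in \eqref{eq:u-expansion} is genuinely quadratic in the digit values. Both are already available. The only point to check here is that the index shift between the sums $\sum\varepsilon_{n+1}$ and $\sum\varepsilon_n$ is harmless, which it is because $\varepsilon_n\to0$.
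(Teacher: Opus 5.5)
Your proposal is correct and matches the paper's proof essentially line by line. You sum \eqref{eq:epsilon-identity} over $n>N_0$, use Lemma~\ref{lem:epsilon} for the two linear $\varepsilon$-series and Cauchy--Schwarz for $\sum p_n\varepsilon_n$ and $\sum\varepsilon_n\varepsilon_{n+1}$, then remove the absolutely summable quadratic remainder from \eqref{eq:u-expansion} and divide by $it\ne0$.
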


\begin{proof}
Sum \eqref{eq:epsilon-identity} over $n>N_0$.  By
Lemma~\ref{lem:epsilon}, the two linear $\varepsilon$-series converge, while
\[
  \sum_{n>N_0} p_n\varepsilon_n,
  \qquad
  \sum_{n>N_0}\varepsilon_n\varepsilon_{n+1}
\]
converge absolutely by Cauchy--Schwarz.  Hence
$\sum_{n>N_0}u_n(t)$ converges.
Finally, apply \eqref{eq:u-expansion} with any fixed
$T\ge |t|$.  Together with \eqref{eq:S2}, it gives
\[
  u_n(t)=itD_n+R_n(t),
  \qquad
  \sum_n\abs{R_n(t)}<\infty.
\]
Since $D_n$ is real and $t\ne0$, the tail $\sum_{n>N_0}D_n$ converges.
Adding the finitely many initial terms gives \eqref{eq:S1}.
\end{proof}

\subsection{Characteristic-function representations}

The equivalence in Theorem~\ref{thm:main} is now proved: the preceding
subsections establish both the sufficiency and the necessity of
\eqref{eq:S1} and \eqref{eq:S2}.  It remains to prove the two representations
of the limiting characteristic function.  The matrix expression is meaningful
at every real frequency, whereas the scalar infinite product is obtained on a
neighbourhood of the origin, where all finite characteristic sums can be kept
non-zero.

Assume that \eqref{eq:S1} and \eqref{eq:S2} hold.  By
\eqref{eq:natural-cutoff-limit}, $H_n(t)/G_n$ converges locally uniformly to
$\Phi(t)$, and $\Phi(0)=1$.  Hence there are $t_1>0$ and $N\ge1$ such that
$H_n(t)\ne0$ for every $n\ge N$ and $\abs t\le t_1$.  For each of the finitely
many indices $0\le n<N$, one has $H_n(0)=G_n>0$; by continuity, after decreasing
$t_1$ if necessary, $H_n(t)\ne0$ also for these indices.  Thus all ratios
$r_n(t)=H_n(t)/H_{n-1}(t)$ are defined for $n\ge1$ and $\abs t\le t_1$.
Since $H_0(t)=1$, telescoping gives
\[
  \prod_{j=1}^n\frac{r_j(t)}{\alpha}
  =\frac{H_n(t)}{\alpha^n}.
\]
Using $G_n/\alpha^n\to\kappa$ together with
\eqref{eq:natural-cutoff-limit} yields \eqref{eq:ratio-product-Phi} with
$t_0=t_1$.

Finally, iterating \eqref{eq:matrix-rec} gives
\[
  H_n(t)
  =\begin{pmatrix}1&0\end{pmatrix}
   A_{n-1}(t)\cdots A_1(t)
   \binom{H_1(t)}{1}.
\]
Combining this identity with $H_n(t)/G_n\to\Phi(t)$ and
$G_n/\alpha^n\to\kappa$ proves the global formula
\eqref{eq:matrix-product-Phi}.

\hypertarget{sec:examples}{}
\section{Examples}

The theorem contains the elementary absolutely summable regime, but the drift
condition is genuinely weaker than absolute convergence.  The next example
exhibits the distinction in the simplest possible form.

\begin{example}
Let
\[
  f(aG_n)=\frac{(-1)^n}{b(n+1)},
  \qquad
  f(kG_n)=0\quad(1\le k<a).
\]
Then the drift series is a tail of the alternating harmonic series, whereas the
quadratic series is dominated by $\sum_n(n+1)^{-2}$.  Thus the limiting
distribution exists although the digit-level values are not absolutely
summable.  This example lies beyond the elementary absolutely summable regime
\(\sum_{n,k}\abs{f(kG_n)}<\infty\).
\end{example}

\begin{example}
Assume that
\[
  \sum_{n\ge0}\sum_{1\le k\le a}\abs{f(kG_n)}<\infty.
\]
Then both \eqref{eq:S1} and \eqref{eq:S2} converge absolutely, so $f$ has a
limiting distribution.  This elementary regime is useful as a reference
point, but it does not account for the conditional cancellation allowed in the
preceding example.
\end{example}

\hypertarget{sec:special-cases}{}
\section{Special cases}

The general drift simplifies in several familiar subfamilies.

\begin{corollary}
Let
\[
  G_{n+2}=aG_{n+1}+G_n.
\]
Then a real-valued $G$-additive function has a limiting distribution if and only
if
\[
  \sum_{n\ge1}\left(
    \alpha\sum_{1\le k<a}f(kG_n)+f(aG_n)
  \right)
\]
and
\[
  \sum_{n\ge0}\sum_{1\le k\le a}f(kG_n)^2
\]
converge.
\end{corollary}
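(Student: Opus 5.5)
This corollary is the specialization $b=1$ of Theorem~\ref{thm:main}, so the plan is to check that the hypotheses are met and then simplify the two series.

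First, the recurrence $G_{n+2}=aG_{n+1}+G_n$ with the paper's initial values $G_0=1$, $G_1=a+1$ is the case $b=1$. The constraint $1\le b\le a$ holds for every integer $a\ge1$, so Proposition~\ref{prop:numeration} and Theorem~\ref{thm:main} apply without change. In this case the admissibility rule \eqref{eq:admissibility} reads $e_{j+1}=a\Rightarrow e_j=0$, and $\alpha=(a+\sqrt{a^2+4})/2$.

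Next, I would compute the drift $D_n$ from \eqref{eq:Dn} at $b=1$:
\begin{itemize}
\item the middle sum $\sum_{1\le k<b}f(kG_{n-1})$ runs over an empty range, so it vanishes;
\item the last term $bf(aG_n)$ reduces to $f(aG_n)$.
\end{itemize}
Hence
\[
  D_n=\alpha\sum_{1\le k<a}f(kG_n)+f(aG_n),
\]
which is exactly the summand of the first displayed series. The quadratic series \eqref{eq:S2} does not depend on $b$ at all, so it coincides verbatim with the second displayed series.

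The equivalence then follows directly from Theorem~\ref{thm:main}. There is no real obstacle here. The only point needing a brief check is that the empty-sum convention for $k<b=1$ is consistent with the paper's definition of $D_n$ and of $\sigma_{n-1}(b;t)=1$, and it is, since $f(0)=0$. When $a=1$ as well, the first inner sum is also empty and the criterion reduces to the Zeckendorf theorem with drift $\sum_n f(G_n)$.
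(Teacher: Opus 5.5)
Your proposal is correct and is the same as the paper's proof. You set $b=1$ in Theorem~\ref{thm:main}, note that the middle sum in \eqref{eq:Dn} is empty and $bf(aG_n)=f(aG_n)$, and observe that \eqref{eq:S2} does not depend on $b$.
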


\begin{proof}
This is Theorem~\ref{thm:main} with $b=1$; the middle sum in
\eqref{eq:Dn} is empty.
\end{proof}

\begin{corollary}
If $b=a$, the pair of conditions \eqref{eq:S1} and \eqref{eq:S2} is equivalent
to \eqref{eq:S2} together with the convergence of
\[
  \sum_{n\ge1}\left(
    (\alpha+1)\sum_{1\le k<a}f(kG_n)+a f(aG_n)
  \right).
\]
\end{corollary}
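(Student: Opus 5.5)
Since \eqref{eq:S2} appears on both sides of the claimed equivalence, it suffices to prove that, assuming \eqref{eq:S2}, the series \eqref{eq:S1} converges if and only if the displayed series converges. The plan is to compare the two series termwise and show that their difference is a telescoping series plus an absolutely convergent remainder. This works because the value $b=a$ lets the middle sum in \eqref{eq:Dn} run over the same digit range $1\le k<a$ as the first sum, only shifted by one index.

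The computation goes as follows. With $b=a$, definition \eqref{eq:Dn} reads
\[
  D_n=\alpha\sum_{1\le k<a}f(kG_n)+\sum_{1\le k<a}f(kG_{n-1})+af(aG_n).
\]
Put $s_n=\sum_{1\le k<a}f(kG_n)$. The displayed summand is then $(\alpha+1)s_n+af(aG_n)$, and its difference from $D_n$ is
\[
  D_n-\bigl((\alpha+1)s_n+af(aG_n)\bigr)=s_{n-1}-s_n .
\]
The partial sums of this difference over $1\le n\le N$ telescope to $s_0-s_N$. Therefore the partial sums of the two series differ by $s_0-s_N$.

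The remaining point, which is the only place needing care, is that $s_N\to0$. This is where \eqref{eq:S2} enters: it forces $f(kG_N)\to0$ for each of the finitely many $k$ with $1\le k<a$, so $s_N\to0$. Hence the two sequences of partial sums converge or diverge together, and when they converge the sums differ exactly by $s_0$. Combining this with \eqref{eq:S2} gives the corollary. No appeal to Theorem~\ref{thm:main} beyond its statement is required.
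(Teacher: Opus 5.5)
Your proof is correct and is essentially the paper's own argument: the paper likewise writes $D_n-E_n=A_{n-1}-A_n$ with $A_n=\sum_{1\le k<a}f(kG_n)$, telescopes to $A_0-A_N$, and uses \eqref{eq:S2} to get $A_N\to0$. The only slip is in your opening plan, which mentions an ``absolutely convergent remainder''; there is none, and the difference of the two series is purely telescoping, exactly as your computation shows.
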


\begin{proof}
Put
\[
  A_n=\sum_{1\le k<a}f(kG_n).
\]
Under \eqref{eq:S2}, one has $A_n\to0$.  When $b=a$, the $n$-th term of
\eqref{eq:S1} is
\[
  D_n=\alpha A_n+A_{n-1}+a f(aG_n),
\]
whereas the $n$-th term of the displayed series in the statement is
\[
  E_n=(\alpha+1)A_n+a f(aG_n).
\]
Hence
\[
  \sum_{n=1}^N(D_n-E_n)=A_0-A_N\longrightarrow A_0.
\]
Thus, under \eqref{eq:S2}, the two drift series converge simultaneously.
\end{proof}

\begin{corollary}
For $a=b=1$, the criterion reduces to
\[
  \sum_{n\ge0}f(G_n)\quad\text{and}\quad
  \sum_{n\ge0}f(G_n)^2,
\]
which is exactly the Erd\H{o}s--Wintner criterion for Zeckendorf-additive
functions proved in \cite[Theorem~7]{DrmotaVerwee}.
\end{corollary}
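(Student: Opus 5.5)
The statement is the case $a=b=1$ of Theorem~\ref{thm:main}, so the plan is to specialize the definitions and then reconcile index ranges.

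First, the numeration system. With $a=b=1$ we have $G_0=1$, $G_1=2$ and $G_{n+2}=G_{n+1}+G_n$. These are the Fibonacci numbers $F_{n+2}$ in the usual indexing. The admissibility rule \eqref{eq:admissibility} becomes: digits lie in $\{0,1\}$, and $e_{j+1}=1$ forces $e_j<1$, i.e.\ no two consecutive ones. This is exactly the Zeckendorf condition. Hence $G$-additivity coincides with Zeckendorf-additivity, and $f$ is determined by the values $f(G_n)$.

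Next, the two series.

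\emph{Quadratic series.} In \eqref{eq:S2} the inner sum over $1\le k\le a$ reduces to the single term $k=1$. So \eqref{eq:S2} is literally $\sum_{n\ge0}f(G_n)^2$.

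\emph{Drift series.} In \eqref{eq:Dn} both sums $\sum_{1\le k<a}$ and $\sum_{1\le k<b}$ are empty. The remaining term is $b\,f(aG_n)=f(G_n)$, so $D_n=f(G_n)$ for $n\ge1$. Thus \eqref{eq:S1} is $\sum_{n\ge1}f(G_n)$. This differs from $\sum_{n\ge0}f(G_n)$ only by the single finite term $f(G_0)=f(1)$, so the two series converge simultaneously.

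Applying Theorem~\ref{thm:main} then gives the criterion as displayed. To identify it with \cite[Theorem~7]{DrmotaVerwee}, the one point needing care is that paper's indexing of the Fibonacci base. It may start at $F_1=1,F_2=2$, or it may include a duplicated value $1$. In either normalization, the sets of place values $\{G_n\}$ coincide with the Zeckendorf place values up to a shift of index. A shift changes neither the convergence of the two series nor the notion of limiting distribution. I expect no genuine obstacle, since this is a bookkeeping verification of empty sums and index shifts.
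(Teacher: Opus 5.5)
Your proposal is correct and follows the paper's proof: for $a=b=1$ one has $G_n=F_{n+2}$, both digit sums in \eqref{eq:Dn} are empty, so $D_n=f(G_n)$, and the quadratic series retains only the term $k=1$. The only remaining discrepancy is the starting index, which changes finitely many terms. Your extra check that the admissibility rule becomes the Zeckendorf no-two-consecutive-ones condition, so that $G$-additivity is Zeckendorf-additivity, is a welcome detail the paper leaves implicit.
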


\begin{proof}
Let $(F_n)$ be the Fibonacci sequence with $F_0=0$ and $F_1=1$.  Here
$G_n=F_{n+2}$, and \eqref{eq:Dn} reduces to $D_n=f(G_n)$.  Shifting the
initial index changes only finitely many terms.
\end{proof}

Theorem~\ref{thm:main} also gives two immediate stability consequences. If
$f$ and $g$ are real-valued $G$-additive functions with limiting distributions,
then $f+g$ has a limiting distribution: the drift condition is linear, while
\((x+y)^2\le2x^2+2y^2\) preserves the quadratic condition.

In particular, if $f$ has a limiting distribution and $g$ is $G$-additive with
\[
  \sum_{n\ge0}\sum_{1\le k\le a}\abs{g(kG_n)}<\infty,
\]
then $f+g$ has a limiting distribution, since this assumption implies the
quadratic condition for $g$ and absolute convergence of its drift series.

The limiting law of $f+g$ need not be the convolution of the two individual
laws, because $f(n)$ and $g(n)$ are evaluated on the same digit expansion and
are generally dependent.

\hypertarget{sec:further-directions}{}
\section{Further directions}

We begin with the natural higher-order conjecture. Let $d\ge2$ and let
$a_0,\ldots,a_{d-1}$ be positive integers. Consider
\[
  G_{n+d}=a_0G_{n+d-1}+\cdots+a_{d-1}G_n\qquad(n\ge0),
\]
with $G_0=1$ and
\[
  G_r=a_0G_{r-1}+\cdots+a_{r-1}G_0+1
  \qquad(0<r<d).
\]
We call $(G_n)$ a linear recurrent base if every non-negative integer has a
unique greedy $G$-expansion, the companion matrix is primitive, and its
dominant root $\alpha>1$ is Pisot with $G_n/\alpha^n\to\kappa>0$. Put
\[
  c=\max_{0\le r<d}a_r.
\]
For a real-valued $G$-additive function $f$, define
\[
  \mathcal D_n
  =\sum_{j<d}\frac1{\alpha^j}
    \sum_{0\le k<a_j}
    \left(
      f(kG_{n+d-j})
      +\sum_{\ell<j}f(a_\ell G_{n+d-\ell})
    \right).
\]

\begin{conjecture}\label{conj:general-lrb}
For every linear recurrent base $(G_n)$ and every real-valued $G$-additive
function $f$, the function $f$ has a limiting distribution if and only if
\[
  \sum_{n\ge0}\mathcal D_n
\]
converges and
\[
  \sum_{n\ge0}\sum_{1\le k\le c}f(kG_n)^2<\infty.
\]
\end{conjecture}

Theorem~\ref{thm:main} proves Conjecture~\ref{conj:general-lrb} in order two.
Indeed, when $d=2$, $a_0=a$ and $a_1=b$, one has, for $n\ge2$,
\[
  \alpha\mathcal D_{n-2}
  =\alpha\sum_{0\le k<a}f(kG_n)
   +\sum_{0\le k<b}f(kG_{n-1})
   +b f(aG_n)
  =D_n,
\]
since $f(0)=0$. Thus the two drift conditions agree up to a non-zero constant
factor and finitely many initial terms, while $c=a$ because $b\le a$.

The order-two proof relies on the explicit Perron symmetrization and on the
fact that a block of length two detects the only digit value invisible to a
one-step norm. In higher order, several digit coordinates may be invisible to
a single matrix, and a fixed block would have to detect them simultaneously at
the Perron scale. Constructing such a coercive block estimate appears to be
the main additional step in this approach.

There is a complementary route in which the finite-state structure of the
digit language is taken as the primary object. Numeration systems defined by
regular languages and their additive functions have been studied by Grabner
and Rigo \cite{GrabnerRigo2003,GrabnerRigo2007}; for the symbolic-dynamical
background on sofic shifts and finite-state codes, see
\cite[Chapters~3 and~5]{LindMarcus}. When the admissible language is regular,
a bounded-memory lifting produces a finite directed graph on which the local
digit contributions become position-dependent edge observables. The
Perron--Frobenius eigenvectors determine the associated Parry chain, and the
covariance form of this chain gives the natural Green energy. On a
finite-dimensional coordinate space whose covariance spectral density is
uniformly positive, this energy is comparable with the ordinary Euclidean
square sum. In standard Parry-type presentations of linear recurrent bases,
zero-padding admissible words rules out hidden twisted coboundary directions.
This suggests that Conjecture~\ref{conj:general-lrb} may ultimately be
approached through spectral positivity rather than through explicit norm
contractions for higher-dimensional companion matrices.

The obstruction in a general finite-state system is precisely the kernel of
this Green energy. If an edge observable has the form
$F(e)=\Psi(b(e))-\Psi(a(e))$, its partial sums telescope, so a one-site square
condition cannot be necessary in that direction. More generally, at non-zero
frequencies the Green kernel may contain twisted coboundary directions; the
quotient language below always refers to the full Green kernel, not only to
ordinary coboundaries. We shall call a sequence \emph{Green--Cauchy} when the
Green energy of every tail $[m,n)$ tends to zero as $m,n\to\infty$; coboundary
components are retained as boundary terms and must have compatible limiting
behaviour. Finite transducers fit the same picture after passing to a reachable
product or context graph; see, for instance, the recent numeration construction
of Charlier, Popoli and Rigo \cite{CharlierPopoliRigo}. The primitive case then
falls back into the fixed finite-state framework, while reducible or periodic
product graphs lead to the first genuinely broader problem.

\begin{conjecture}\label{conj:transducer-EW}
Let $\mathcal T$ be a finite transducer driven by a topologically mixing sofic
input system equipped with its measure of maximal entropy, and fix an initial
transducer state. Let $(g_n)_{n\ge0}$ be real-valued position-dependent output
observables of uniformly bounded memory such that $\|g_n\|_\infty\to0$.
Decompose the reachable product graph into its recurrent components and cyclic
classes. On each recurrent component, centre the output observable and separate
its class modulo the Green kernel from its finite-state coboundary part.

Then the additive output sums converge in distribution if and only if, on
every recurrent component reached with positive asymptotic weight, the drift
series converges, the quotient component is Green--Cauchy, and the coboundary
boundary term has compatible limiting behaviour, with the resulting weighted
mixture of component laws independent of the length modulo a common period.
\end{conjecture}

Conjecture~\ref{conj:transducer-EW} still depends on a transducer realization.
The maximal formulation should instead be intrinsic to the underlying sofic
system. We state it in the infinitesimal regime in order not to introduce a
spurious large-jump obstruction along telescoping coboundary directions; a
non-infinitesimal version should impose the corresponding large-jump condition
after the same Green--coboundary reduction.

\begin{conjecture}\label{conj:sofic-EW}
Let $X$ be a topologically mixing sofic system with measure of maximal entropy
$\mu$, and let $(g_n)_{n\ge0}$ be a sequence of real-valued local observables
of uniformly bounded memory such that $\|g_n\|_\infty\to0$. Put
\[
  d_n=\int_X g_n\,d\mu,
  \qquad
  \widetilde g_n=g_n-d_n.
\]
Choose a primitive right-resolving finite presentation of $X$, pass if
necessary to a finite context graph, and lift $(\widetilde g_n)$ to edge
observables. After separating the finite-state coboundary component, the laws
under $\mu$ of
\[
  S_N=\sum_{n<N}g_n
\]
converge if and only if the following conditions hold:
\begin{enumerate}[label=\textup{(\roman*)}]
\item the drift series
\[
  \sum_{n\ge0}d_n
\]
converges;
\item the image of $(\widetilde g_n)$ in the quotient by the finite-state
Green kernel is Green--Cauchy;
\item the induced coboundary boundary term is tight and has the joint limiting
behaviour required to combine with the Green--Cauchy component.
\end{enumerate}
Moreover, these conditions and the resulting limiting law are independent of
the chosen right-resolving presentation and of the finite-memory lifting.
\end{conjecture}

When the lifted coordinate space has uniformly positive Green density, the
quotient condition in Conjecture~\ref{conj:sofic-EW} reduces to ordinary square
summability. The three conjectures therefore form a natural hierarchy: the
explicit linear-recurrent criterion, its transducer extension, and finally an
intrinsic finite-state Erd\H{o}s--Wintner principle for sofic systems.

\appendix
\hypertarget{app:perron-product}{}
\section{The second-order Perron product lemma}

We prove here Proposition~\ref{prop:product}.  Thus, let
$A_n=A+E_n$, where $A$ has a simple dominant eigenvalue $\alpha$ and a
second eigenvalue $\lambda$ with $|\lambda|<\alpha$.  Under the assumptions
\[
  \sum_{n\ge0}\|E_n\|^2<\infty
  \qquad\text{and}\qquad
  \sum_{n\ge0}\ell^{\mathsf T}E_n r
  \quad\text{converges},
\]
our goal is to prove that
\[
  \alpha^{-N}A_{N-1}\cdots A_0
\]
converges as $N\to\infty$, together with the locally uniform version when
the matrices depend on a parameter.

General asymptotic-integration results for perturbed linear difference
systems may be found in the work of Coffman and of Benzaid--Lutz
\cite{Coffman,BenzaidLutz}.  Here we give a direct proof adapted to the
particular combination needed in Subsection~\ref{sec:suff}: the full
perturbation is square-summable, while the drift along the Perron direction
is only assumed to converge, not necessarily absolutely.  This also isolates
the point at which the argument goes beyond the absolute coefficient
perturbation used in \cite[Lemma~2]{BaratGrabner}.

\begin{proof}
We separate the Perron direction from the contracting eigendirection.
After diagonalizing $A$ and dividing by its Perron root $\alpha$, the Perron
coordinate has unperturbed multiplier $1$, whereas the second coordinate has
multiplier $\vartheta$ with $|\vartheta|<1$.  We straighten the small coupling
between these two directions and reduce the problem to a triangular product.

\medskip
\noindent\emph{Step 1: diagonalize the main term.}
Choose a right eigenvector $s$ of $A$ associated with $\lambda$ and put
$S=(r,s)$.  Since $A$ is diagonalizable,
\[
  S^{-1}AS=
  \begin{pmatrix}
    \alpha&0\\
    0&\lambda
  \end{pmatrix}.
\]
For each $n$, define
\[
  M_n:=\alpha^{-1}S^{-1}A_nS.
\]
Then
\[
  M_{N-1}\cdots M_0
  =\alpha^{-N}S^{-1}A_{N-1}\cdots A_0S.
\]
Hence the normalized products in Proposition~\ref{prop:product} converge if
and only if the products $M_{N-1}\cdots M_0$ converge.

Write
\[
  \alpha^{-1}S^{-1}E_nS
  =
  \begin{pmatrix}
    \xi_n&\beta_n\\
    \gamma_n&\zeta_n
  \end{pmatrix}.
\]
Since $A_n=A+E_n$,
\[
  M_n=
  \begin{pmatrix}
    1+\xi_n&\beta_n\\
    \gamma_n&\vartheta+\zeta_n
  \end{pmatrix},
  \qquad
  \vartheta:=\frac{\lambda}{\alpha},
  \qquad |\vartheta|<1.
\]
The map $E\mapsto \alpha^{-1}S^{-1}ES$ is bounded on
$\C^{2\times2}$, so the hypothesis $\sum_n\|E_n\|^2<\infty$ gives
\[
  (\xi_n),\;(\beta_n),\;(\gamma_n),\;(\zeta_n)\in\ell^2.
\]

We next identify the perturbation in the Perron coordinate.  Since
$\ell^{\mathsf T}A=\alpha\ell^{\mathsf T}$ and $As=\lambda s$,
\[
  (\alpha-\lambda)\ell^{\mathsf T}s=0.
\]
Thus $\ell^{\mathsf T}s=0$, and the first row of $S^{-1}$ is
$\ell^{\mathsf T}/(\ell^{\mathsf T}r)$.  Consequently,
\[
  \xi_n
  =\frac{\ell^{\mathsf T}E_n r}
         {\alpha\,\ell^{\mathsf T}r}.
\]
The hypothesis on the Perron drift therefore gives convergence of
$\sum_n\xi_n$.  In these coordinates, $\xi_n$ is the first-order perturbation
along the Perron direction, $\beta_n$ and $\gamma_n$ are the two off-diagonal
couplings, and $\zeta_n$ perturbs the contracting direction.

\medskip
\noindent\emph{Step 2: straighten the perturbed Perron direction.}
It is enough to control a tail of the product.  Choose
$|\vartheta|<\vartheta_0<\rho<1$, and then choose $\eta>0$ so small that
\[
  \frac{\vartheta_0+\eta}{1-2\eta}\le\rho
  \qquad\text{and}\qquad
  \frac{\eta}{1-2\eta}+\rho\le1.
\]
Since the four perturbation entries tend to zero, we may fix $n_0$ so that
all of them have modulus at most $\eta$ for $n\ge n_0$.

We seek numbers $z_n$ and $\mu_n$ such that
\[
  M_n\binom{1}{z_n}
  =\mu_n\binom{1}{z_{n+1}}.
\]
Starting from $z_{n_0}=0$, the first coordinate gives
\[
  \mu_n=1+\xi_n+\beta_n z_n,
\]
and the second gives the Riccati recurrence
\begin{equation}\label{eq:Riccati}
  z_{n+1}
  =\frac{\gamma_n+(\vartheta+\zeta_n)z_n}
         {1+\xi_n+\beta_nz_n}.
\end{equation}
If $|z_n|\le1$, then
\[
  |1+\xi_n+\beta_nz_n|\ge1-2\eta
\]
and hence
\[
  |z_{n+1}|
  \le \frac{|\gamma_n|}{1-2\eta}
     +\frac{\vartheta_0+\eta}{1-2\eta}|z_n|
  \le \frac{|\gamma_n|}{1-2\eta}+\rho|z_n|.
\]
Our choice of $\eta$ shows inductively that $|z_n|\le1$ for every
$n\ge n_0$.  With $C=(1-2\eta)^{-1}$, iteration gives
\[
  |z_n|
  \le C\sum_{n_0\le j<n}\rho^{n-1-j}|\gamma_j|.
\]
Young's convolution inequality gives $(z_n)\in\ell^2$; in particular,
$z_n\to0$.

Now put
\[
  T_n=
  \begin{pmatrix}
    1&0\\
    z_n&1
  \end{pmatrix}.
\]
By \eqref{eq:Riccati},
\[
  T_{n+1}^{-1}M_nT_n
  =U_n:=
  \begin{pmatrix}
    \mu_n&\beta_n\\
    0&\nu_n
  \end{pmatrix},
\]
where
\[
  \mu_n=1+\xi_n+\beta_nz_n,
  \qquad
  \nu_n=\vartheta+\zeta_n-z_{n+1}\beta_n.
\]
Thus the moving change of coordinates removes the lower-left coupling and
reduces the problem to a triangular product.

\medskip
\noindent\emph{Step 3: convergence of the triangular product.}
Since $(\beta_n)_{n\ge n_0}$ and $(z_n)_{n\ge n_0}$ belong to $\ell^2$,
Cauchy--Schwarz gives
\[
  \sum_{n\ge n_0}|\beta_nz_n|<\infty.
\]
Since
\(
  \mu_n-1=\xi_n+\beta_nz_n,
\)
the series $\sum_{n\ge n_0}(\mu_n-1)$ converges and
$(\mu_n-1)\in\ell^2$.  Moreover,
$z_n\to0$ and $\zeta_n,\beta_n\to0$, whence
\[
  \nu_n\longrightarrow\vartheta.
\]

Choose $n_1\ge n_0$ and $\rho_1<1$ so that, for every $n\ge n_1$,
\[
  |\mu_n-1|\le\frac12,
  \qquad
  |\nu_n|\le\rho_1.
\]
For $|w|\le1/2$,
\[
  \log(1+w)=w+O(|w|^2),
\]
with the principal branch of the logarithm.  Therefore
$\sum_{n\ge n_1}\log\mu_n$ converges, and
\[
  P_N:=\prod_{n_1\le k<N}\mu_k
\]
converges to a non-zero limit.  In particular, $(P_N)$ and $(P_N^{-1})$ are
bounded.

For $N>n_1$, let $B_N$ denote the upper-right entry of
$U_{N-1}\cdots U_{n_1}$.  Factoring out the upper-left product gives
\[
  B_N
  =P_N\sum_{j=n_1}^{N-1}
    \frac{\beta_j}{P_{j+1}}
    \prod_{n_1\le k<j}\nu_k.
\]
The sequences $(P_N)$ and $(P_N^{-1})$ are bounded, and therefore the
$j$-th summand is bounded by
\[
  C|\beta_j|\rho_1^{j-n_1}.
\]
This majorant is summable by Cauchy--Schwarz.  Hence the series on the
right converges absolutely as $N\to\infty$.  Since $(P_N)$ also converges,
$(B_N)$ converges.  The upper-left entry is $P_N$,
the lower-left entry is zero, and the lower-right entry is
$\prod_{n_1\le k<N}\nu_k$, which tends to zero.  Thus
\[
  U_{N-1}\cdots U_{n_1}
\]
converges.  Multiplying on the right by the fixed finite product
$U_{n_1-1}\cdots U_{n_0}$ shows that
$U_{N-1}\cdots U_{n_0}$ converges as well.

Finally,
\[
  M_{N-1}\cdots M_{n_0}
  =T_NU_{N-1}\cdots U_{n_0}T_{n_0}^{-1},
\]
and $T_N\to I$.  Hence the tail products of the $M_n$ converge.  Reintroducing
the finitely many initial factors and conjugating back by $S$ proves the first
assertion of Proposition~\ref{prop:product}.

\medskip
\noindent\emph{Uniformity in the parameter.}
Suppose now that the matrices depend on $t$ as in the second part of the
proposition, and fix $R>0$.  For $|t|\le R$, the hypotheses imply
\[
  \sum_n\sup_{|t|\le R}
  \bigl(|\xi_n(t)|^2+|\beta_n(t)|^2
        +|\gamma_n(t)|^2+|\zeta_n(t)|^2\bigr)<\infty,
\]
and $\sum_n\xi_n(t)$ converges uniformly.  The index $n_0$ in Step~2 can
therefore be chosen independently of $t$.  Since the denominator in
\eqref{eq:Riccati} is then bounded below by $1-2\eta$, the recurrence also
shows inductively that each $z_n(t)$, and hence each $U_n(t)$, is continuous
on $|t|\le R$.

Set
\[
  c_n:=\sup_{|t|\le R}|\gamma_n(t)|,
  \qquad
  w_n:=\sup_{|t|\le R}|z_n(t)|.
\]
The estimate from Step~2 gives
\[
  w_n\le C\sum_{n_0\le j<n}\rho^{n-1-j}c_j.
\]
Since $(c_n)\in\ell^2$, Young's inequality yields $(w_n)\in\ell^2$; in
particular, $w_n\to0$.

Similarly, put
\[
  b_n:=\sup_{|t|\le R}|\beta_n(t)|.
\]
Then $(b_n)\in\ell^2$, and
\[
  \sum_n\sup_{|t|\le R}|\beta_n(t)z_n(t)|
  \le\sum_n b_nw_n<\infty.
\]
Thus $\sum_n(\mu_n(t)-1)$ converges uniformly on $|t|\le R$.  Moreover,
\[
  \sum_n\sup_{|t|\le R}|\mu_n(t)-1|^2<\infty.
\]
The relations above also give $\nu_n(t)\to\vartheta$ uniformly on
$|t|\le R$.

We may therefore choose $n_1\ge n_0$ and $\rho_1<1$, independently of $t$,
such that
\[
  |\mu_n(t)-1|\le\frac12,
  \qquad
  |\nu_n(t)|\le\rho_1
  \qquad(n\ge n_1,\ |t|\le R).
\]
The logarithmic expansion is uniform on this disc.  Hence
\[
  P_N(t):=\prod_{n_1\le k<N}\mu_k(t)
\]
converges uniformly to a function bounded away from zero on $|t|\le R$.

Let $B_N(t)$ be the upper-right entry of
$U_{N-1}(t)\cdots U_{n_1}(t)$.  As above,
\[
  B_N(t)
  =P_N(t)\sum_{j=n_1}^{N-1}
    \frac{\beta_j(t)}{P_{j+1}(t)}
    \prod_{n_1\le k<j}\nu_k(t).
\]
Uniform convergence of $P_N(t)$ to a function bounded away from zero implies
that $P_N(t)$ and $P_N(t)^{-1}$ are uniformly bounded for
$N\ge n_1$ and $|t|\le R$.  The $j$-th summand is therefore bounded by
\[
  Cb_j\rho_1^{j-n_1}.
\]
This sequence is summable, so the Weierstrass $M$-test gives uniform
convergence of the series.  Together with the uniform convergence of $P_N(t)$,
this proves uniform convergence of $B_N(t)$.  The remaining three entries
converge uniformly as well, and hence
$U_{N-1}(t)\cdots U_{n_1}(t)$ converges uniformly on $|t|\le R$.
The finite block $U_{n_1-1}(t)\cdots U_{n_0}(t)$ is continuous, hence
uniformly bounded on the compact interval, so multiplication by it preserves
uniform convergence.

Finally, $w_N\to0$ gives $T_N(t)\to I$ uniformly.  The identity
\[
  M_{N-1}(t)\cdots M_{n_0}(t)
  =T_N(t)U_{N-1}(t)\cdots U_{n_0}(t)T_{n_0}^{-1}
\]
therefore yields uniform convergence of the tail product.  Multiplication on
the right by the finite continuous product
$M_{n_0-1}(t)\cdots M_0(t)$ gives uniform convergence of
$M_{N-1}(t)\cdots M_0(t)$ on $|t|\le R$.  Since each finite product is
continuous in $t$, its uniform limit is continuous.  As $R$ was arbitrary,
the convergence is locally uniform.
\end{proof}


\begin{thebibliography}{99}

\bibitem{BenzaidLutz}
Z.~Benzaid and D.~A. Lutz,
\emph{Asymptotic representation of solutions of perturbed systems of linear
  difference equations},
Stud. Appl. Math. \textbf{77} (1987), 195--221.

\bibitem{BaratGrabner}
G.~Barat and P.~J. Grabner,
\emph{Distribution properties of $G$-additive functions},
J. Number Theory \textbf{60} (1996), 103--123.

\bibitem{CharlierPopoliRigo}
E.~Charlier, P.~Popoli and M.~Rigo,
\emph{Computing expansions in infinitely many Cantor real bases via a single
transducer},
arXiv:2507.04848, 2025.

\bibitem{Coffman}
C.~V. Coffman,
\emph{Asymptotic behavior of solutions of ordinary difference equations},
Trans. Amer. Math. Soc. \textbf{110} (1964), 22--51.

\bibitem{Delange}
H.~Delange,
\emph{Sur les fonctions $q$-additives ou $q$-multiplicatives},
Acta Arith. \textbf{21} (1972), 285--298.

\bibitem{DrmotaVerwee}
M.~Drmota and J.~Verwee,
\emph{Effective Erd\H{o}s--Wintner theorems for digital expansions},
J. Number Theory \textbf{229} (2021), 218--260.

\bibitem{ErdosWintner}
P.~Erd\H{o}s and A.~Wintner,
\emph{Additive arithmetical functions and statistical independence},
Amer. J. Math. \textbf{61} (1939), 713--721.

\bibitem{GrabnerRigo2003}
P.~J. Grabner and M.~Rigo,
\emph{Additive functions with respect to numeration systems on regular
languages},
Monatsh. Math. \textbf{139} (2003), 205--219.

\bibitem{GrabnerRigo2007}
P.~J. Grabner and M.~Rigo,
\emph{Distribution of additive functions with respect to numeration systems
on regular languages},
Theory Comput. Syst. \textbf{40} (2007), 205--223.

\bibitem{GrabnerTichy}
P.~J. Grabner and R.~F. Tichy,
\emph{Contributions to digit expansions with respect to linear recurrences},
J. Number Theory \textbf{36} (1990), 160--169.

\bibitem{LindMarcus}
D.~Lind and B.~Marcus,
\emph{An Introduction to Symbolic Dynamics and Coding},
2nd ed., Cambridge University Press, Cambridge, 2021.

\bibitem{TenenbaumVerwee}
G.~Tenenbaum and J.~Verwee,
\emph{Effective Erd\H{o}s--Wintner theorems},
Proc. Steklov Inst. Math. \textbf{314} (2021), 264--278.

\bibitem{VerweeLRBWithdrawn}
J.~Verwee,
\emph{Erd\H{o}s--Wintner theorem for linear recurrent bases},
arXiv:2512.20882, withdrawn, 2026.

\bibitem{VerweeCantor}
J.~Verwee,
\emph{Effective Erd\H{o}s--Wintner for Cantor numeration systems via a
trailing-window method},
Int. J. Number Theory \textbf{22} (2026), 1175--1194.

\end{thebibliography}
\end{document}